\let\phi\varphi
\newcommand\tab[1][.5cm]{\hspace*{#1}}
\newtheorem{theorem}{Theorem}[section]
\newtheorem{corollary}[theorem]{Corollary}
\newtheorem{lemma}[theorem]{Lemma}
\newtheorem{proposition}[theorem]{Proposition}
\theoremstyle{definition}
\newtheorem{definition}[theorem]{Definition}
\newtheorem{remark}[theorem]{Remark}
\newtheorem{example}[theorem]{Example}
\newtheorem{conjecture}[theorem]{Conjecture}
\newcommand*{\bbb}[1]{{\mathbb{#1}}}
\newcommand{\C}{\bbb{C}}
\newcommand{\Q}{\bbb{Q}}
\newcommand{\Z}{\bbb{Z}}
\newcommand{\F}{\bbb{F}}
\newcommand{\Gal}{\text{Gal}}
\newcommand{\Nrm}{\text{N}}
\newcommand{\Mod}[1]{\ (\mathrm{mod}\ #1)}
\newcommand{\mO}{\mathcal{O}}
\title{On Gauss factorials and their connection to the cyclotomic $\lambda$-invariants of imaginary quadratic fields.}
\author{Matt Stokes}
\begin{document}

\maketitle

\pagestyle{plain}

\begin{abstract}
In this paper we establish a connection between the Gauss factorials and Iwasawa's cyclotomic $\lambda$-invariant for an imaginary quadratic field $K$.  As a result, we will explain a correspondence between the 1-exceptional primes of Cosgrave and Dilcher \cite{cd1}, \cite{cd} for $m = 3$ and $m = 4$, and the primes for which the $\lambda$-invariants for $K = \Q(\sqrt{-3})$ and $K = \Q(i)$ are greater than one, respectively.  We refer to the latter primes as ``non-trivial'' for their respective fields.  We will also see that similar correspondences are true for $K = \Q(\sqrt{-d})$ when $d = 2,5$ and $6$.  As a corollary we find that primes $p$ of the form $p^2 = 3x^2 + 3x + 1$ are always non-trivial for $K = \Q(\sqrt{-3})$.  Last, we show that the non-trivial primes $p$ for $K = \Q(i)$ and $K = \Q(\sqrt{-3})$ are characterized by modulo $p^2$ congruences involving Euler and Glaisher numbers respectively.
\end{abstract}

%\begin{keyword}
%Iwasawa $\lambda$-invariant \sep Gauss factorials \sep imaginary quadratic fields 
%\end{keyword}

\section{Introduction and statement of main results}

Let $p$ be an odd prime, and $d > 0$ a square-free integer.  Denote $K = \Q(\sqrt{-d})$ and $\lambda_p(K)$ to be Iwasawa's $\lambda$-invariant for the cyclotomic $\Z_p$-extension of $K$.  In \cite{dfks}, Dummit, Ford, Kisilevsky and Sands compute $\lambda_p(K)$ for various primes and imaginary quadratic fields.  They define the non-trivial primes of $K$ to be those which satisfy $\lambda_p(K) > 1$ (non-trivial since $\lambda_p(K) >0$ whenever $p$ splits in $K$).  For example, Table 1 gives the non-trivial primes for $K = \Q(\sqrt{-3})$ and $K = \Q(i)$ for primes $p < 10^7$ (see Table 1 in \cite{dfks} for all other imaginary quadratic fields with discriminants up to 1,000).
\begin{table}[h!]\label{table1}
\begin{center}
\caption{Non-trivial primes $p<10^7$ of $ \Q(\sqrt{-3})$ and $\Q(i)$.}
\vskip5mm
\begin{tabular}{|c |c c c c c c|} 
 \hline
$K= \Q(\sqrt{-3})$ & 13 & 181 & 2521 & 76543 & 489061 & 6811741 \\ 
 \hline
 $K = \Q(i)$ & 29789 &  & & & &  \\
 \hline
\end{tabular}
\end{center}
\end{table}

Authors such as Ellenberg, Jain, and Venkatesh \cite{ejv}, Horie \cite{horie}, Ito \cite{ito}, and Sands \cite{sands} have studied $\lambda_p(K)$ by fixing a prime $p$ and varying the imaginary quadratic field $K$.  Dummit, Ford, Kisilevsky, and Sands \cite{dfks}, and Gold \cite{gold} have studied the case when $K$ is fixed and $p$ varies (which is the point of view we take in this paper), but less seems to be known in this situation.  Another point of view might be to fix both $p$ and $K$ and vary the $\Z_p$-extension of $K$.  Interestingly, Sands \cite{sands1} has shown that if $p$ does not divide the class number of $K$, and the cyclotomic $\lambda$-invariant $\lambda_p(K) \leq 2$, then every other $\Z_p$-extension $K_{\infty}/K$ has $\lambda_p \leq 2$ and $\mu_p = 0$.  Therefore, knowing the non-trivial primes $p$ of $K$ is important for our overall understanding of the other $\Z_p$-extensions of $K$.

On the other hand, for $m \in \Z^+$ we have the seemingly unrelated 1-exceptional primes $p$ for $m$ studied by Cosgrave and Dilcher, that is, primes $p \equiv 1 \Mod m$ such that $\left(\frac{p^2-1}{m}\right)_p^{p-1}! = \left( \prod_{a = 1\atop \gcd(a,p) = 1}^{\frac{p^2-1}{m}} a \right)^{p-1} \equiv 1 \Mod{p^2}$.  Surprisingly, the primes $p$ in Table 1 are exactly the 1-exceptional primes for $m= 3$ and $m = 4$ respectively, with $p < 10^7$ (see the next section, or \cite{cd1} and \cite{cd} to learn about 1-exceptional primes).  

The main result of this paper is Theorem \ref{gaussthrm}, which is a criterion in terms of Gauss factorials that give $\lambda_p(K) > 1$ (this is a condition that works for every imaginary quadratic field $K$ and any primes $p$ that split in $K$).  From this, we obtain an explanation for the apparent connection between the $1$-exceptional primes for $m= 3$ and $m=4$, and the non-trivial primes of $K = \Q(\sqrt{-3})$ and $K = \Q(i)$, as well as some similar results for $K = \Q(\sqrt{-d})$ with $d = 2,5$ and $6$:

\begin{theorem}\label{mainthrm}
Let $K =\Q(\sqrt{-d})$ and $D = 2d$ if $d \equiv 3 \Mod4$ and $D = 4d$ otherwise.  Let $r\in \Z^+$ such that $p^r \equiv 1 \Mod{D}$, and suppose that $p$ does not divide the class number of $K$.  Then for $d = 1,2,3,5$ and $6$ we have
\[
\lambda_p(K) > 1 \iff  \left(\frac{\left( \frac{p^{2r} -1}{D}\right)^2_p!}{\left( \frac{p^{2r}-1}{D/2}\right)_p!}\right)^{p-1} \equiv 1 \Mod{p^2}.
\]
In particular, $p$ is $1$-exceptional for $m = 3$ if and only if $\lambda_p(\Q(\sqrt{-3}) > 1$ and $p$ is $1$-exceptional for $m = 4$ if and only if $\lambda_p(\Q(i)) > 1$.
 \end{theorem}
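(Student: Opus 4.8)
The plan is to deduce Theorem \ref{mainthrm} from the general Gauss-factorial criterion Theorem \ref{gaussthrm} (the ``main result'' promised in the introduction) by specializing to the six fields $K = \Q(\sqrt{-d})$ with $d = 1,2,3,5,6$ and then doing the elementary arithmetic that identifies the resulting Gauss factorial with the displayed ratio $\left(\frac{p^{2r}-1}{D}\right)^2_p! \big/ \left(\frac{p^{2r}-1}{D/2}\right)_p!$. First I would recall that $D = 2d$ when $d \equiv 3 \Mod 4$ and $D = 4d$ otherwise is exactly $|\disc K|$ (so $D = 3,8,4,20,24$ for $d = 3,2,1,5,6$), and that $p$ splitting in $K$ is governed by the Kronecker symbol $\left(\frac{-d}{p}\right) = 1$, equivalently $p$ splits in $\Q(\sqrt{-d})$ iff the relevant quadratic character $\chi_{-d}$ of conductor $D$ is trivial on $p$; the hypothesis $p^r \equiv 1 \Mod D$ guarantees $\chi_{-d}(p^r)=1$ and makes $p^{2r}-1$ divisible by $D$, so all the Gauss factorials in sight are well defined. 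The main structural input is whatever form Theorem \ref{gaussthrm} takes: presumably it says $\lambda_p(K) > 1$ iff a certain product $\left(\prod^*_{a} a\right)^{p-1} \equiv 1 \Mod{p^2}$, where the range of $a$ is cut out by the character $\chi_{-d}$ (a Gauss-factorial attached to $\chi_{-d}$), and the content of Theorem \ref{mainthrm} is that for these six small discriminants that character-weighted product collapses to the stated ratio of ordinary Gauss factorials.

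The key steps, in order, would be: \textbf{(1)} Write down explicitly, for each $d \in \{1,2,3,5,6\}$, the quadratic character $\chi_{-d}$ modulo $D$ and the subset $S_d \subseteq \{1,\dots,D-1\}$ on which it takes the value that appears in Theorem \ref{gaussthrm}; e.g. for $d = 3$, $D = 3$ and $\chi_{-3}$ is the nontrivial character mod $3$; for $d = 1$, $D = 4$ and $\chi_{-4}$ is the nontrivial character mod $4$; for $d = 2$, $D = 8$, $\chi_{-8}$ the character with $\chi_{-8}(1)=\chi_{-8}(3)=1$, $\chi_{-8}(5)=\chi_{-8}(7)=-1$ (or its companion), and similarly for $d = 5,6$. \textbf{(2)} Translate the $\chi_{-d}$-Gauss-factorial over a full period $\frac{p^{2r}-1}{\,\cdot\,}$ into a quotient of two ordinary Gauss factorials by the standard ``inclusion–exclusion over residue classes'' identity $\left(\tfrac{N}{b}\right)_p! \big/ \left(\tfrac{N}{b'}\right)_p!$-type manipulations (this is the kind of lemma Cosgrave–Dilcher use): the point is that for these small $D$ the support of $\chi_{-d}$ among residues mod $D$ consists precisely of the classes lying in $\{1,\dots,D/2\}$ but \emph{not} in the finer subdivision, so the character-weighted product over the interval up to $\frac{p^{2r}-1}{D}$, after raising to the $(p-1)$, equals the ratio $\left(\tfrac{p^{2r}-1}{D}\right)^2_p! \big/ \left(\tfrac{p^{2r}-1}{D/2}\right)_p!$ modulo $p^2$. \textbf{(3)} Raise to the $(p-1)$-th power throughout so that everything lands in the relevant $p$-unit group modulo $p^2$ (this kills the ``trivial'' Wilson-type factors and is what makes the statement independent of the choice of $r$), and check the congruence is unchanged when $r$ is replaced by any multiple. \textbf{(4)} For the last sentence, specialize to $d=3$, $D=3$ and $d=1$, $D=4$: then $D/2$ is $3/2$ and $4/2=2$ — here I would note that for $m=3$ the Cosgrave–Dilcher $1$-exceptionality condition $\left(\frac{p^2-1}{3}\right)^{p-1}_p! \equiv 1 \Mod{p^2}$ is literally the $r=1$ case of the displayed equivalence once one observes the $\left(\tfrac{p^{2}-1}{3/2}\right)_p!$ factor is a $p$-unit whose $(p-1)$-power is $\equiv 1$, and similarly $m=4$ matches $d=1$; thus Table 1's coincidence is explained.

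The main obstacle I expect is \textbf{Step (2)}: making the passage from the $\chi_{-d}$-twisted Gauss factorial (as it naturally emerges from Theorem \ref{gaussthrm}, where $\lambda_p(K)>1$ is controlled by a Kummer-type congruence for a generalized Bernoulli number $B_{1,\chi_{-d}\omega^{-1}}$ or an analytic class-number/Leopoldt-style expression) into a \emph{clean} ratio of two ordinary Gauss factorials valid modulo $p^2$, uniformly in $r$. For general $d$ the character $\chi_{-d}$ would not have support that aligns with intervals of the form $(\tfrac{N}{D/2}, \tfrac{N}{D}]$, which is exactly why the theorem is restricted to $d = 1,2,3,5,6$ — these are precisely the discriminants for which the quadratic character is (up to the trivial character and the sign) ``interval-like'' mod $D$, so the collapse in Step (2) happens. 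I would organize the write-up by first proving a lemma that records, for each of the five $d$'s, the identity $\prod^*_{a}\chi_{-d}(a)\text{-weighted} \equiv \left(\tfrac{p^{2r}-1}{D}\right)^2_p!\big/\left(\tfrac{p^{2r}-1}{D/2}\right)_p! \Mod{p^2}$ after the $(p-1)$-power, treating $d=3,1$ with $D$ odd resp.\ $D=4$ as slightly special cases, and then simply quoting Theorem \ref{gaussthrm} to finish; the $1$-exceptional corollary is then a one-line specialization to $r=1$.
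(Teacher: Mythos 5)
Your high-level route is the paper's route: specialize the general Jacobi-sum/Gauss-factorial criterion of Theorem \ref{gaussthrm} to these five fields, the structural point being that for them the character $\chi_K$ takes the value $+1$ on \emph{every} residue in $[1,D/2]$ coprime to $D$ (the paper packages this as ``maximal class number,'' Definition \ref{maximal}, and Theorem \ref{maxclassnumber} shows these are essentially the only such fields). But there are two genuine gaps. First, your identification $D=|{\rm disc}(K)|$ is wrong for $d\equiv 3\Mod 4$: the theorem's own definition gives $D=2d=6$ for $d=3$, not $3$. This is why your Step (4) produces the meaningless ``$D/2=3/2$.'' With the correct $D=6$, the criterion for $\Q(\sqrt{-3})$ involves the ratio of $\bigl(\frac{p^2-1}{6}\bigr)^2_p!$ to $\bigl(\frac{p^2-1}{3}\bigr)_p!$, and deducing the equivalence with $1$-exceptionality for $m=3$ (which involves only $\bigl(\frac{p^2-1}{3}\bigr)_p!$) is \emph{not} a one-line specialization: the paper needs the Cosgrave--Dilcher inputs Theorem \ref{thrm3} and the identity $\bigl(\frac{p^n-1}{3}\bigr)_p!^{24}\equiv\bigl(\frac{p^n-1}{6}\bigr)_p!^{12}\Mod{p^n}$ of Theorem \ref{thrm4}. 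Your $m=4$ reduction via Wilson's theorem is fine (modulo the small observation that squaring is injective on the order-$p$ subgroup of $(\Z/p^2\Z)^\times$).

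Second, your Step (2) does not actually close. Specializing the product in Theorem \ref{gaussthrm} under the maximal-class-number hypothesis kills the first factor (since $\chi(i)=-1$ on $(D/2,D)$) but still leaves a product over \emph{all} $\phi(D)/2$ residues $i\in[1,D/2]$ coprime to $D$, namely $\prod_i \bigl(i\frac{p^{2r}-1}{D/2}\bigr)_p!\,/\,\bigl(i\frac{p^{2r}-1}{D}\bigr)^2_p!$, not the single $i=1$ term in the statement. No inclusion--exclusion identity turns that product into the single ratio. The paper instead proves Theorem \ref{maximalthrm} by re-running the Gold's-criterion argument with the single Jacobi sum $J(\psi^{-1})$: in the maximal case $S_1(D)$ consists exactly of the quadratic residues, so $J(\psi^{-1})$ alone generates (the extension of) $\bar{\frak p}$, hence $J(\psi^{-1})^{h_K}$ agrees with a generator $\alpha$ of $\bar{\frak p}^{h_K}$ up to a unit, and $p\nmid h_K$ lets one strip the exponent $h_K$ inside the order-$p$ group $1+p\Z_p/p^2\Z_p$; the Gross--Koblitz congruence $J(\psi^{-1})\equiv \bigl(\frac{p^{2r}-1}{D/2}\bigr)_p!/\bigl(\frac{p^{2r}-1}{D}\bigr)^2_p! \Mod{p^2\Z_p}$ then gives the single-term criterion. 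You would need to either reproduce this argument or separately justify why the one-term and full-product conditions are equivalent.
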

 
 The proof of Theorem \ref{mainthrm} relies on the fact the the fields $K =\Q(\sqrt{-d})$, where $d = 1,2,3,5$ and $6$, have so called \textit{``maximal class numbers''} (see Definition \ref{maximal}).  We will prove Theorem \ref{maxclassnumber} which tells us that these are the only imaginary quadratic fields with such class numbers, under the assumption that the generalized Riemann hypothesis is true.

As a corollary of Theorem \ref{mainthrm} we will see that primes $p$ of the form $p^2 = 3x^2 + 3x + 1$ with $x \in \Z$ always give $\lambda_p(\sqrt{-3}) > 1$.  However, the converse does not hold (see Remark \ref{rempell}).  Theorem \ref{mainthrm} also leads to
\begin{corollary}\label{bernoullicor}
For $K = \Q(\sqrt{-d})$ for $d = 1, 2, 3, 5$ and $6$, we have
\[
\lambda_p(K) > 1 \iff B_p(2/D) \equiv 2^pB_p(1/D) \Mod{p^3}
\]
where $B_n(x)$ is the $n$-th Bernoulli polynomial.
\end{corollary}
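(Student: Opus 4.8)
The plan is to deduce this from Theorem \ref{mainthrm} by translating the Gauss-factorial congruence into a congruence between Bernoulli polynomial values, using the Fermat-quotient machinery. First I would recall the standard connection between Gauss factorials $\left(\frac{p^{r}-1}{m}\right)_p!$ and Fermat quotients: for a prime $p$ with $p \equiv 1 \Mod m$, one has a known evaluation of $\left(\prod_{a,\ \gcd(a,p)=1}^{(p-1)/m} a\right)^{p-1} \Mod{p^2}$ in terms of the Fermat quotient $q_p(2)$ and the Bernoulli number $B_{p-1}$-type expressions; more precisely, I would work modulo $p^2$ and use the expansion coming from the $p$-adic Gamma function or, equivalently, from the Kummer-type congruence $\sum_{a=1}^{\lfloor p^{2r}/D\rfloor}' a^{p-2} \equiv$ (Bernoulli polynomial value) $\Mod{p^2}$. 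The key identity to establish is that the ratio appearing in Theorem \ref{mainthrm},
\[
\left(\frac{\left( \frac{p^{2r} -1}{D}\right)^2_p!}{\left( \frac{p^{2r}-1}{D/2}\right)_p!}\right)^{p-1},
\]
is congruent to $1 \Mod{p^2}$ if and only if a certain linear combination of $B_p(1/D)$ and $B_p(2/D)$ vanishes modulo $p^3$, with the exact coefficient $2^p$ emerging from the doubling/distribution relation for Bernoulli polynomials.

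Second, I would carry out the computation explicitly. Taking $\log_p$ of the left side (legitimate since it is a $1$-unit mod $p^2$), the problem reduces to showing a sum of the form $(p-1)\big(\log_p(\frac{p^{2r}-1}{D})^2_p! - \log_p(\frac{p^{2r}-1}{D/2})_p!\big) \equiv 0 \Mod{p^2}$. Using the classical formula expressing the truncated factorial product's logarithmic derivative in terms of $\sum a^{p-2}$ over the relevant range, and then applying the Bernoulli-polynomial formula $\sum_{a=0}^{N-1} a^{k} = \frac{1}{k+1}\big(B_{k+1}(N) - B_{k+1}\big)$ with $k = p-1$, $N \approx p^{2r}/D$ or $p^{2r}/(D/2)$, I would arrive at an expression involving $B_p(p^{2r}/D)$ and $B_p(2p^{2r}/D)$. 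Then I would use that $B_p(x + \text{integer multiple of } p) \equiv B_p(x) \Mod{p^\bullet}$ appropriately (via the finite-difference formula $B_p(x+1) - B_p(x) = p x^{p-1}$ iterated), together with $p^{2r} \equiv 1 \Mod D$, to replace the arguments by $1/D$ and $2/D$. The multiplication-by-$2$ (Raabe) formula $B_p(2x) = 2^{p-1}\big(B_p(x) + B_p(x + 1/2)\big)$ is what produces the factor $2^p$ after clearing the $(p-1)$ versus $p$ discrepancy in exponents; one checks the modulus improves from $p^2$ in Theorem \ref{mainthrm} to $p^3$ here because of the extra factor of $p$ coming from $\frac{1}{k+1} = \frac{1}{p}$ in the Bernoulli summation formula.

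Finally, I would verify the congruence $B_p(2/D) \equiv 2^p B_p(1/D) \Mod{p^3}$ is genuinely equivalent — not merely implied — by noting that every step above (the $\log_p$, the Bernoulli formula, the finite-difference reduction, the Raabe formula) is reversible modulo the stated powers of $p$, and that $D/2$ is a positive integer precisely for $d = 1,2,3,5,6$ (where $D = 4, 8, 6, 20, 24$), so the statement of Theorem \ref{mainthrm} literally applies. The main obstacle I anticipate is bookkeeping the powers of $p$: one must track carefully that the extra denominator $p$ in $\frac{1}{k+1}$, the factor $p^{2r}\equiv 1$, and the $p-1$ versus $p$ exponents all conspire to upgrade the modulus from $p^2$ to $p^3$ while still producing the clean coefficient $2^p$ rather than $2^{p-1}$ or $2^{p+1}$; a secondary subtlety is ensuring the $p$-adic logarithm argument is valid, i.e. that both Gauss factorials in question are $p$-adic units, which follows since every factor $a$ in the product is coprime to $p$ by definition.
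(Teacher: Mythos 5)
Your high-level plan---reduce to Theorem \ref{mainthrm} and convert the Gauss-factorial ratio into a Bernoulli congruence via Fermat quotients and the power-sum formula---is the paper's route (the statement is exactly Lemma \ref{computation} applied with $m=D/2$, which in turn rests on Proposition \ref{prop1.1}). However, the two mechanisms you name at the crux points are not the ones that work, and as written both steps fail. First, the coefficient $2^p$ does \emph{not} come from Raabe's multiplication formula: applying $B_p(2x)=2^{p-1}\bigl(B_p(x)+B_p(x+1/2)\bigr)$ at $x=1/D$ drags in the extraneous value $B_p(1/D+1/2)$, which for general $D$ (e.g.\ $D=8$, where it is $\pm B_p(3/8)$) is not expressible through $B_p(1/D)$ and $B_p(2/D)$; the identity only collapses for $D=6$. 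In the computation that actually works, $2^p=2\cdot 2^{p-1}$, where the factor $2$ comes from the exponent $2$ on $\left(\frac{p^{2r}-1}{D}\right)_p!$ (equivalently from the $-2\sum q_p(a)$ term) and the factor $2^{p-1}=D^{p-1}/(D/2)^{p-1}$ arises because $\sum_{a=1}^{(p-1)/n}q_p(a)\equiv -\frac{n^{p-1}}{p}B_p(1/n)+(\text{harmonic and Wilson-quotient corrections})\Mod{p}$, the $n^{p-1}$ entering via $q_p(na)\equiv q_p(n)+q_p(a)$ before one invokes $B_p(x+1)-B_p(x)=px^{p-1}$.

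Second, your replacement of $B_p(p^{2r}/D)$ by $B_p(1/D)$ ``since the arguments differ by an integer multiple of $p$'' is not available: the arguments differ by the integer $(p^{2r}-1)/D$, which is coprime to $p$, and iterating the finite-difference formula across that gap just reproduces $p\sum_j(x+j)^{p-1}$, i.e.\ the power sum you set out to evaluate, so the step is circular. The working alternatives are either the paper's (split the product over $1\le a\le (p^2-1)/m$, $\gcd(a,p)=1$, into residue blocks $a+bp$ so that only the short range $1\le a\le \frac{p-1}{m}$ ever meets a Bernoulli evaluation), or, in your normalization, to write $N+1=\frac{D-1}{D}+\frac{p^{2r}}{D}$, use the reflection $B_p(1-x)=-B_p(x)$, and expand the shift by $\frac{p^{2r}}{D}$ binomially; there the $k=1$ term $pB_{p-1}(x)\cdot\frac{p^{2r}}{D}$ is only $O(p^2)$ because von Staudt--Clausen puts a $p$ in the denominator of $B_{p-1}$, and since the target congruence is modulo $p^3$ this term must be computed and cancelled against the unit-count corrections rather than discarded. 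A smaller point: Lemma \ref{computation} (and hence the clean Bernoulli statement) is proved only for the exponent $p^2$, so your appeal to general $p^{2r}$ needs the same reduction to $r=1$ that the paper itself leaves implicit.
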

In particular, we obtain some interesting conditions for the non-trivial primes of $K = \Q(i)$ and $K = \Q(\sqrt{-3})$ in terms of Glaisher and Euler numbers respectively.  Recall the Euler numbers $\{E_n\}$ and Glaisher numbers $\{G_n\}$ are defined by 
\[
\sum_{n = 0}^{\infty}E_n\frac{x^n}{n!} = \frac{2}{e^x + e^{-x}} \tab \text{ and } \tab \sum_{n = 0}^{\infty} G_n \frac{x^n}{n!} = \frac{3/2}{e^{x} + e^{-x} + 1}.
\]
We will prove:

\begin{corollary}\label{cor1.2}
Let $p \equiv 1 \Mod 4$ be a prime and $E_n$ denote the $n$-th Euler number.  Then $\lambda_p(\Q(i)) > 1$ if and only if $E_{p-1} \equiv 0 \Mod{p^2}$.
\end{corollary}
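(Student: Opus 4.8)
The plan is to deduce Corollary~\ref{cor1.2} directly from Corollary~\ref{bernoullicor}, specialized to $K = \Q(i)$ (so $d = 1$ and $D = 4$), by rewriting the Bernoulli-polynomial congruence as a congruence for Euler numbers. In this case Corollary~\ref{bernoullicor} reads
\[
\lambda_p(\Q(i)) > 1 \iff B_p(1/2) \equiv 2^p B_p(1/4) \Mod{p^3}.
\]
The first step is to observe that $B_p(1/2) = 0$: by the classical evaluation $B_n(1/2) = (2^{1-n}-1)B_n$ and the vanishing of odd-index Bernoulli numbers (here $p \equiv 1 \Mod 4$, so $p$ is odd and $\geq 5$), the factor $B_p$ annihilates it. Since $2$ is a unit modulo $p$, the criterion collapses to $B_p(1/4) \equiv 0 \Mod{p^3}$.

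The main point is then to identify $B_p(1/4)$ with an explicit multiple of $E_{p-1}$. I would use the standard identity relating the Euler polynomials $E_n(x)$ to Bernoulli polynomials, $E_n(x) = \frac{2}{n+1}\bigl(B_{n+1}(x) - 2^{n+1}B_{n+1}(x/2)\bigr)$ — which follows from substituting $\frac{1}{e^t+1} = \frac{1}{e^t-1} - \frac{2}{e^{2t}-1}$ into the usual generating functions — evaluated at $x = 1/2$, $n = p-1$; with $B_p(1/2) = 0$ this gives $E_{p-1}(1/2) = -\frac{2^{p+1}}{p}B_p(1/4)$. On the other hand, the paper's normalization $\sum_n E_n x^n/n! = 2/(e^x+e^{-x})$ and the matching expansion $\sum_n E_n(1/2)x^n/n! = 2/(e^{x/2}+e^{-x/2})$ give $E_{p-1} = 2^{p-1}E_{p-1}(1/2)$. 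Combining, $p\,E_{p-1} = -2^{2p}B_p(1/4)$, hence $B_p(1/4) = -\frac{p}{2^{2p}}E_{p-1}$, which is a $p$-adic integer since $E_{p-1} \in \Z$. Therefore $B_p(1/4) \equiv 0 \Mod{p^3}$ is equivalent to $p\,E_{p-1} \equiv 0 \Mod{p^3}$, i.e.\ to $E_{p-1} \equiv 0 \Mod{p^2}$, which is the assertion. No auxiliary hypothesis needs to be checked because $\Q(i)$ has class number one.

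I do not expect a real obstacle here — everything is elementary once Corollary~\ref{bernoullicor} is available — but the step requiring the most care is tracking the powers of $2$ and the signs through the two normalizations, since the paper's Euler numbers are the signed secant numbers ($E_0 = 1$, $E_2 = -1$, $E_4 = 5$, $\dots$) and it is easy to be off by a factor. I would confirm $p\,E_{p-1} = -2^{2p}B_p(1/4)$ on a small case, e.g.\ $p = 5$, where $E_4 = 5$ and $B_5(1/4) = -25/1024$, so that $5 \cdot 5 = -2^{10}(-25/1024)$ checks out. It is also reassuring that the identity makes $B_p(1/4) \equiv 0 \Mod{p^2}$ automatic, matching the classical fact that $p \mid E_{p-1}$ for every $p \equiv 1 \Mod 4$, so that Corollary~\ref{cor1.2} is genuinely a mod-$p^2$ condition.
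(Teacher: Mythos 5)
Your proposal is correct and follows essentially the same route as the paper: specialize the Bernoulli criterion to $D=4$, use $B_p(1/2)=0$, and reduce to $B_p(1/4)=-pE_{p-1}/4^p$. The only difference is that you derive this last identity from the generating functions, whereas the paper simply cites it from Lehmer; your sign/power-of-two bookkeeping and the $p=5$ check are both correct.
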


\begin{corollary}\label{cor1.3}
Let $p \equiv 1 \Mod 3$ be a prime and $G_n$ denote the $n$-th Glaisher number.  Then $\lambda_p(\Q(\sqrt{-3})) > 1$ if and only if $G_{p-1} \equiv 0 \Mod{p^2}$.
\end{corollary}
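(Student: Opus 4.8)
The plan is to deduce Corollary \ref{cor1.3} from Corollary \ref{bernoullicor} by rewriting both sides in terms of $B_p(1/3)$. For $K = \Q(\sqrt{-3})$ one has $d = 3 \equiv 3 \Mod 4$, hence $D = 2d = 6$, and since $p \equiv 1 \Mod 3$ the prime $p$ splits in $K$ (and $p \neq 3$, so $2$ and $3$ are units in $\Z_p$); Corollary \ref{bernoullicor} then reads
\[
\lambda_p(\Q(\sqrt{-3})) > 1 \iff B_p(1/3) \equiv 2^p B_p(1/6) \Mod{p^3}.
\]

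First I would record the identity linking the Glaisher numbers to Bernoulli polynomials. Multiplying the defining series through by $e^x$ and using $e^{2x} + e^x + 1 = (e^{3x}-1)/(e^x-1)$ gives $\sum_n G_n x^n/n! = \tfrac{3}{2}(e^{2x}-e^x)/(e^{3x}-1)$; substituting $t = 3x$ into the generating function $t\,e^{yt}/(e^t-1) = \sum_n B_n(y)\,t^n/n!$ and comparing coefficients then yields $G_m = \tfrac{3^{m+1}}{2(m+1)}\bigl(B_{m+1}(2/3) - B_{m+1}(1/3)\bigr)$. By the reflection formula $B_n(1-x) = (-1)^n B_n(x)$ the bracket vanishes when $m$ is odd and equals $-2B_{m+1}(1/3)$ when $m$ is even, so in particular
\[
G_{p-1} = -\,\frac{3^p}{p}\,B_p(1/3).
\]

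Next I would deal with $p$-integrality, which is what makes the factor $1/p$ harmless. In $B_p(x) = \sum_{k} \binom{p}{k} B_k x^{p-k}$ the only term not patently in $\Z_p[x]$ is the one with $k = p-1$, where $\binom{p}{p-1}B_{p-1} = pB_{p-1} \equiv -1 \Mod p$ by von Staudt--Clausen; the same bookkeeping gives the standard congruence $B_p(x) \equiv x^p - x \Mod p$ for $x \in \Z_p$. Evaluating at $x = 1/3$ and applying Fermat shows $B_p(1/3) \in \Z_p$ with $B_p(1/3) \equiv 0 \Mod p$; hence $G_{p-1} = -3^p B_p(1/3)/p$ lies in $\Z_p$, and as $3^p$ is a unit,
\[
G_{p-1} \equiv 0 \Mod{p^2} \iff B_p(1/3) \equiv 0 \Mod{p^3}.
\]
To finish, I would show the right-hand side of Corollary \ref{bernoullicor} is equivalent to the same condition: the multiplication formula $B_p(1/3) = 2^{p-1}\bigl(B_p(1/6) + B_p(2/3)\bigr)$ together with $B_p(2/3) = -B_p(1/3)$ gives $2^p B_p(1/6) = (2 + 2^p)B_p(1/3)$, whence $B_p(1/3) - 2^p B_p(1/6) = -(1+2^p)B_p(1/3)$ with $1 + 2^p \equiv 3 \Mod p$ a unit. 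Chaining the three equivalences proves the corollary; Corollary \ref{cor1.2} goes through identically, and is marginally simpler because there $B_p(1/2) = 0$.

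The step I expect to be the main obstacle is the third one: because of the denominator $p$ in $G_{p-1} = -3^p B_p(1/3)/p$, the whole argument hinges on pinning down $v_p(B_p(1/3))$ precisely --- one must be certain that $B_p(1/3)$ is $p$-integral and divisible by $p$ (equivalently, that $G_{p-1}$ is genuinely $p$-integral) so that ``$G_{p-1} \equiv 0 \Mod{p^2}$'' and ``$B_p(1/3) \equiv 0 \Mod{p^3}$'' really do correspond. Everything else is formal manipulation with the multiplication and reflection identities for Bernoulli polynomials.
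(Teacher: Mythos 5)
Your proposal is correct and follows essentially the same route as the paper: both reduce to the Bernoulli-polynomial criterion of Lemma \ref{computation}/Corollary \ref{bernoullicor}, establish the identity $G_{p-1} = -3^p B_p(1/3)/p$ by a generating-function computation, and use Raabe's multiplication formula together with the reflection $B_p(2/3) = -B_p(1/3)$ to collapse $B_p(1/3) - 2^pB_p(1/6)$ to $-(1+2^p)B_p(1/3)$ with $1+2^p$ a unit mod $p$. The only cosmetic difference is that you justify the $p$-integrality and $p$-divisibility of $B_p(1/3)$ directly via $B_p(x) \equiv x^p - x \Mod p$, where the paper instead cites Glaisher for the integrality of $G_n$ away from $3$ and lets Lemma \ref{computation} carry the integrality of the quotient.
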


\begin{remark}
The numbers $\{G_n\}$ were studied by Glaisher in \cite{glaisher1} and \cite{glaisher2} in which they are referred to as \textit{$I$-numbers}.
\end{remark}

An analogue of Theorem \ref{gaussthrm} for primes $p$ giving $\lambda_p(\Q(\sqrt{-d}))> 2$ is proved in the author's PhD thesis, but uses a different technique involving $p$-adic $L$-functions.

%==========================================================================
%==========================================================================

\section{Gauss factorials and exceptional primes}
In this section we define Gauss factorials and exceptional primes, as well as state some results that will be needed for the proof of Theorem \ref{mainthrm} as well as Corollary \ref{corollary1}.  For $N, n \in \Z^+$ the Gauss factorial of $N$ with respect to $n$ is defined as
\[
N_n! \hskip1mm= \hskip2mm \prod_{\mathclap{\substack{i = 1 \\ \gcd(i,n) = 1}}}^{N} i
\]
In \cite{cd}, Cosgrave and Dilcher investigate multiplicative orders modulo powers of $p$ of the following Gauss factorials
\[
\left( \frac{p^{\alpha} - 1}{m}\right)_p! 
\] 
where $m,\alpha \in \Z^+$, with $m$ and $\alpha$ greater than 2, and $p \equiv 1 \Mod m$.  If $\gamma_{\alpha+1}^m(p)$ is the multiplicative order of $\left( \frac{p^{\alpha+1}-1}{m}\right)_p!$ modulo $p^{\alpha+1}$, then Cosgrave and Dilcher define $p$ to be $\alpha$-exceptional for $m$ if $\gamma_{\alpha+1}^m(p)$ and $\gamma_{\alpha}^m(p)$ are the same modulo a factor of $2^{\pm1}$ (otherwise  $\gamma_{\alpha+1}^m(p) = p\gamma_{\alpha}^m(p)$ or  $\gamma_{\alpha+1}^m(p) = 2^{\pm1}p\gamma_{\alpha}^m(p)$, see Theorem 1 and Definition 1 in \cite{cd}).  Further, Theorem 3 in \cite{cd} shows that if $p$ is $\alpha$ exceptional for $m$, then $p$ is also $(\alpha-1)$-exceptional for $m$.  For our purposes, we will not need this much precision on the multiplicative orders, and we will instead use the equivalent definition:

\begin{definition}\label{exceptional}
For $\alpha \in \Z^+$, we say that $p$ is $\alpha$-exceptional for $m$ if and only if $\left(\frac{p^{\alpha+1} -1}{m}\right)^{{p-1}}_p! \equiv 1 \Mod{p^{\alpha+1}}$.  
\end{definition}

We contrast Definition \ref{exceptional} with the following definition of ``non-trivial'' primes.  Theorem \ref{mainthrm} will show that the primes in each of the two definitions are the same when $m = 3$ and $K = \Q(\sqrt{-3})$, and when $m = 4$ and $K = \Q(i)$:

\begin{definition}\label{nontrivial}
Given an imaginary quadratic field $K$, we say that $p$ is non-trivial for $K$ if $\lambda_p(K) > 1$.
\end{definition}

\begin{example}
Let $p = 13$ and $m = 3$.  Then $\gamma_{1}^3(13) = 12$, $\gamma_2^3(13) = 12$, $\gamma_3^3(13) = 12\cdot13$, $\gamma_4^3(13) = 12\cdot13^2$, $\gamma_5^3(13) = 12\cdot13^3$ and so on (``and so on" since Theorem 3 in \cite{cd} says that $p$ is $(\alpha+1)$-exceptional for $m$ implies $p$ is also $\alpha$-exceptional for $m$).  The next few values of $p$ such that $\gamma_1^3(p) = \gamma_2^3(p)$ are $p = 181, 2521, 76543$ and so on.  On the other hand, if $m =4$ and $p = 29789$, then $\gamma_2^4(p) = \frac{1}{2}\gamma_1^4(p)$, and is the only known such example for $p < 10^{11}$ (see also Table 1 in \cite{cd1} for $\gamma_{\alpha}^4(p)$ with $1\leq \alpha \leq 5$, $p\leq 37$ and $p \equiv 1 \Mod 4$).
\end{example}

The following results of Cosgrave and Dilcher will be important later on:

\begin{theorem}[Cosgrave-Dilcher \cite{cd}]\label{thrm3}
Let $p \equiv 1 \Mod 6$ be a prime. Then $p$ is 1-exceptional for $m =3$ if and only if $p$ is 1-exceptional for $m =6$.  
\end{theorem}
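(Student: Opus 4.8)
The plan is to reduce the statement to a direct comparison of the two Gauss factorials $\left(\frac{p^2-1}{3}\right)_p!$ and $\left(\frac{p^2-1}{6}\right)_p!$ modulo $p^2$, using the elementary observation that the set $\{a : 1 \le a \le \frac{p^2-1}{3},\ \gcd(a,p)=1\}$ is the disjoint union of $\{a : 1 \le a \le \frac{p^2-1}{6},\ \gcd(a,p)=1\}$ and its ``reflection'' $\{a : \frac{p^2-1}{6} < a \le \frac{p^2-1}{3},\ \gcd(a,p)=1\}$. First I would write $N = \frac{p^2-1}{6}$, so that $\frac{p^2-1}{3} = 2N$ and $\frac{p^2-1}{2} = 3N$, and note $p \equiv 1 \Mod 6$ guarantees $N \in \Z$. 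The second block consists of the integers $N+1, N+2, \dots, 2N$ (with the multiples of $p$ deleted), which under $a \mapsto 2N - a$ or $a \mapsto 3N - a$ can be matched against smaller blocks; the cleanest route is to use the pairing within $\{1, \dots, 3N\}$ given by $a \leftrightarrow 3N + 1 - a$ together with Wilson-type evaluations of $\left(\frac{p^2-1}{2}\right)_p!$, since $3N = \frac{p^2-1}{2}$ is exactly the half-range whose Gauss factorial is congruent to $\pm 1 \Mod{p^2}$ by the generalized Wilson theorem for $p^2$.

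The key steps, in order, are: (1) record that $\left(\frac{p^2-1}{2}\right)_p! \equiv \pm 1 \Mod{p^2}$ (the Gauss–Wilson theorem modulo $p^2$, valid since $(\Z/p^2\Z)^\times$ is cyclic), so that raising to the $(p-1)$st power kills this ambiguous sign; (2) express $\left(\frac{p^2-1}{2}\right)_p! = \left(\frac{p^2-1}{3}\right)_p! \cdot \prod_{2N < a \le 3N,\, p \nmid a} a$, and identify the trailing product, via $a \mapsto 3N + 1 - a = \frac{p^2+1}{2} - a$, with $\pm\left(\frac{p^2-1}{6}\right)_p! = \pm N_p!$ modulo $p^2$ (one must track that the map sends multiples of $p$ to multiples of $p$, using $p \mid \frac{p^2+1}{2} - \frac{p^2+1}{2}$-type bookkeeping, i.e.\ that $\frac{p^2+1}{2} \equiv \frac{1}{2} \not\equiv 0$, so the correspondence of $p$-free residues is clean); (3) combine (1) and (2) to get $\left(\frac{p^2-1}{3}\right)_p! \cdot \left(\frac{p^2-1}{6}\right)_p! \equiv \pm 1 \Mod{p^2}$, hence after the $(p-1)$st power $\left(\frac{p^2-1}{3}\right)_p!^{\,p-1} \equiv \left(\frac{p^2-1}{6}\right)_p!^{\,p-1} \Mod{p^2}$, which by Definition \ref{exceptional} is exactly the claimed equivalence of $1$-exceptionality for $m=3$ and $m=6$.

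The main obstacle I anticipate is step (2): making the reflection argument rigorous modulo $p^2$ rather than modulo $p$. One has to verify that $a \mapsto \frac{p^2+1}{2} - a$ is a bijection from $\{2N+1, \dots, 3N\}$ onto $\{1, \dots, N\}$ (here $\frac{p^2+1}{2} - 3N = N+1 - \tfrac{p^2-1}{2} + \tfrac{p^2-1}{2}$... more simply $\frac{p^2+1}{2} - 3N = \frac{p^2+1}{2} - \frac{p^2-1}{2} = 1$ and $\frac{p^2+1}{2} - (2N+1) = \frac{p^2+1}{2} - \frac{p^2+1}{3} = \frac{p^2+1}{6} = N$, so the image is indeed $\{1,\dots,N\}$), that it carries the $p$-divisible elements in the source bijectively to the $p$-divisible elements in the target (because $\gcd(a,p)=1 \iff \gcd(\frac{p^2+1}{2}-a, p) = 1$, as $p \nmid \frac{p^2+1}{2}$), and that each pair contributes a factor $a \cdot (\frac{p^2+1}{2} - a) \equiv -a^2 \cdot \tfrac{1}{\phantom{2}}$... in fact one simply gets $\prod (\frac{p^2+1}{2}-a) \equiv (-1)^{(\text{count})} N_p! \cdot (\text{correction from }\tfrac{p^2+1}{2})$; pinning down that correction term and the overall sign — and checking it is absorbed by the $(p-1)$st power — is the delicate part. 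If the sign tracking becomes unwieldy, the fallback is to work directly in $(\Z/p^2\Z)^\times \cong \Z/(p(p-1))\Z$ and compare the images of the two Gauss factorials under the projection to the $(p-1)$-part, which is all Definition \ref{exceptional} sees, thereby sidestepping the exact $\pm 1$ entirely.
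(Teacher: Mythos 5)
The paper states this result purely as a citation of Cosgrave--Dilcher and gives no proof, so your attempt must stand on its own; unfortunately step (2) contains a genuine error that breaks the argument. The reflection $a \mapsto \frac{p^2+1}{2}-a$ does \emph{not} carry the $p$-coprime elements of $\{2N+1,\dots,3N\}$ onto the $p$-coprime elements of $\{1,\dots,N\}$: the equivalence $p\nmid a \iff p\nmid (c-a)$ holds when $p\mid c$ (which is why the usual Wilson reflection $a\mapsto p^2-a$ works), but fails when $p\nmid c$, since then every unit $a\equiv c\Mod{p}$ is sent to a multiple of $p$. Here $c=\frac{p^2+1}{2}\equiv 2^{-1}\Mod{p}$, so multiples of $p$ in the source go to units and vice versa. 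Moreover, even ignoring the $p$-divisibility bookkeeping, $\prod_{b=1}^{N}\bigl(\frac{p^2+1}{2}-b\bigr)\equiv 2^{-N}\prod_{b=1}^{N}(1-2b)=\pm\, 2^{-2N}(2N)!/N!$, i.e.\ (up to sign and powers of $2$) the product of the odd numbers up to $2N-1$, not $\pm N!$; so the identification of the trailing block with $\pm N_p!$ is wrong in substance. Consequently the conclusion of step (3) is false: for $p=7$ one computes $\left(\frac{p^2-1}{3}\right)_p!=16_7!\equiv 2$ and $\left(\frac{p^2-1}{6}\right)_p!=8_7!\equiv 27\Mod{49}$, and their product is $54\equiv 5\not\equiv\pm1\Mod{49}$.

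The underlying issue is that the relation between the $m=3$ and $m=6$ Gauss factorials is genuinely deeper than Wilson-type reflections. What reflections give you for free are the relations $\left(\frac{k(p^2-1)}{6}\right)_p!\cdot\left(\frac{(6-k)(p^2-1)}{6}\right)_p!\equiv\pm1\Mod{p^2}$, which pair the $k$-th block with the $(6-k)$-th and never couple $k=1$ to $k=2$. The coupling that actually exists comes from Jacobi-sum congruences --- precisely the identity $J(\psi^{-1})\equiv \left(\frac{p^2-1}{3}\right)_p!\bigl/\left(\frac{p^2-1}{6}\right)_p!^2\Mod{p^2}$ that the paper invokes via Gross--Koblitz --- and it is what yields Cosgrave--Dilcher's multiplicative relation (Theorem~\ref{thrm4}), $\left(\frac{p^n-1}{3}\right)_p!^{24}\equiv\left(\frac{p^n-1}{6}\right)_p!^{12}\Mod{p^n}$, from which the equivalence of $1$-exceptionality for $m=3$ and $m=6$ does follow after raising to the $(p-1)$st power. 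Your proposed fallback of projecting to the prime-to-$p$ part of $(\Z/p^2\Z)^{\times}$ does not rescue the argument, since it still presupposes the false multiplicative relation. (A minor additional slip in step (1): for $p\equiv 1\Mod{4}$ one has $\left(\frac{p^2-1}{2}\right)_p!^{\,2}\equiv-1\Mod{p^2}$, so that factorial is a primitive fourth root of unity rather than $\pm1$; the consequence you actually need, that its $(p-1)$st power is $1$, is nevertheless correct.)
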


\begin{theorem}[Cosgrave-Dilcher \cite{cd}]\label{thrm4}
Let $p \equiv 1 \Mod 6$ be a prime and $n \in \Z^+$. Then 
\[
\left(\left(\frac{p^n -1}{3}\right)_p!\right)^{24} \equiv \left(\left(\frac{p^n-1}{6}\right)_p!\right)^{12} \Mod{p^n}.
\]

\end{theorem}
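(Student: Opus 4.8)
The plan is to fix $q = p^n$, put $M = (q-1)/6$ (an integer since $6\mid p-1$), and abbreviate $A = \left(\frac{q-1}{3}\right)_p! = (2M)_p!$, $B = \left(\frac{q-1}{6}\right)_p! = M_p!$, $C = \left(\frac{q-1}{2}\right)_p! = (3M)_p!$, all of which are units modulo $q$. I would run the argument on three elementary tools: (i) the Gauss--Wilson congruence $(q-1)_p! \equiv -1 \Mod q$, which holds because the integers in $[1,q-1]$ prime to $p$ are precisely a set of representatives for the cyclic group $(\Z/q\Z)^{\times}$; (ii) the reflection $i \mapsto q-i$, a bijection on residues prime to $p$ that turns $\prod_{i\in[a,b]} i$ into $\pm\prod_{i\in[q-b,\,q-a]} i$; and (iii) a ``doubling'' split of a product of consecutive integers prime to $p$ into its even and odd parts. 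Everything will reduce to the two congruences $A^2 \equiv \pm\,2^{\phi(q)/3}\,BC \Mod q$ and $C^2 \equiv \pm 1 \Mod q$.

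First I would decompose $A=(2M)_p!$ by parity. The even integers in $[1,2M]$ prime to $p$ are $\{2k : 1\le k\le M,\ p\nmid k\}$, with product $2^{c}B$ where $c=\#\{k\in[1,M]:p\nmid k\}$; writing $O$ for the product of the odd integers in $[1,2M]$ prime to $p$ gives $A = 2^{c}B\,O$. Now apply (ii) to those odd integers: since $q$ is odd, $i\mapsto q-i$ sends them bijectively onto the \emph{even} integers of $[q-2M,\,q-1]=[4M+1, 6M]$ that are prime to $p$, and the product of the latter is $\pm\,2^{g}\cdot[2M+1, 3M]_p!$ with $g=\#\{k\in[2M+1, 3M]:p\nmid k\}$ (write those even integers as $2k$ with $k\in[2M+1, 3M]$, and pull out a sign from the reflection). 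Since $[2M+1, 3M]_p! = C/A$ (split $[1,3M]$ at $2M$), we get $O\equiv \pm\,2^{g}\,CA^{-1}$, whence $A^2\equiv \pm\,2^{c+g}\,BC \Mod q$.

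It remains to identify $e:=c+g$ and to deal with $C$, and this is where $p\equiv 1\Mod 6$ enters. From $6M = q-1\equiv -1 \Mod p$ one reads off $M\equiv(p-1)/6$, $2M\equiv(p-1)/3$, $3M\equiv(p-1)/2 \pmod p$, hence $\lfloor M/p\rfloor=(p^{n-1}-1)/6$, $\lfloor 2M/p\rfloor=(p^{n-1}-1)/3$, $\lfloor 3M/p\rfloor=(p^{n-1}-1)/2$; substituting gives $c=g=\phi(q)/6$, so $e=\phi(q)/3$. Applying the same bookkeeping to the reflection of $[1,3M]$ onto $[3M+1, q-1]$ and invoking (i) gives $(-1)^{\phi(q)/2}C^2\equiv -1$, i.e.\ $C^2\equiv\pm 1\Mod q$. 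Finally, raise $A^2\equiv\pm\,2^{\phi(q)/3}BC$ to the twelfth power: the sign vanishes, $2^{12\phi(q)/3}=(2^{\phi(q)})^4\equiv 1$ by Euler's theorem, and $C^{12}=(C^2)^6\equiv 1$, leaving $A^{24}\equiv B^{12}\Mod q$, which is the assertion. I expect the only genuinely delicate point to be the floor-function bookkeeping that pins $e$ to \emph{exactly} $\phi(q)/3$ (not merely up to a bounded error) and likewise pins the exponent $\phi(q)/2$ in the relation for $C$; everything else is formal, and these counts are precisely where the congruence $p\equiv 1\Mod 6$ is used.
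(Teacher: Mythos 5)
Your proof is correct, and in fact the paper offers nothing to compare it against: Theorem \ref{thrm4} is imported from Cosgrave--Dilcher \cite{cd} and is not proved in this paper at all, so your elementary argument is a genuine self-contained substitute for the citation. I checked the points you flagged as delicate and they do work out: with $M=(q-1)/6$ one has $M = p\cdot\frac{p^{n-1}-1}{6} + \frac{p-1}{6}$ with $0\le\frac{p-1}{6}<p$, so $\lfloor M/p\rfloor=(p^{n-1}-1)/6$ exactly (and similarly for $2M,3M$), giving $c=g=\phi(q)/6$ on the nose; the reflection $i\mapsto q-i$ does carry the odd integers of $[1,2M]$ prime to $p$ bijectively onto the even integers of $[4M+1,6M]$ prime to $p$, since $q-2M=4M+1$; and the Gauss--Wilson congruence $(q-1)_p!\equiv-1\Mod{q}$ is the standard fact that the product of all units of the cyclic group $(\Z/p^n\Z)^{\times}$ is $-1$ for odd $p$. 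The only cosmetic imprecision is the sign bookkeeping, which you correctly defer: the undetermined signs are $(-1)^{s}$ (with $s$ the number of odd units in $[1,2M]$) and $(-1)^{\phi(q)/2}$, and both are annihilated by the twelfth power, as is $2^{12\cdot\phi(q)/3}=(2^{\phi(q)})^{4}\equiv 1$ by Euler. So the chain $A^{2}\equiv\pm 2^{\phi(q)/3}BC$, $C^{2}\equiv\pm1$, hence $A^{24}\equiv B^{12}\Mod{p^n}$, is complete. One could even note that your intermediate relation $A^{2}\equiv\pm 2^{\phi(q)/3}BC$ is sharper than the stated theorem and is essentially the mechanism behind the Cosgrave--Dilcher order comparisons; if you wanted the precise signs you would need to evaluate $s$ and $\phi(q)/2$ modulo $2$, but that is not required here.
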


\begin{theorem}[Cosgrave-Dilcher \cite{cd1}]\label{thrm5}
Every prime $p \equiv 1 \Mod 6$ that satisfies $p^2 = 3x^2 + 3x + 1$ for some $x \in \Z$ is $1$-exceptional for $m=3$.  Equivalently, if $\gamma = 2+\sqrt{3}$ and $q \in \Z^+$, then any prime of the form
\[
p = \frac{\gamma^q + \gamma^{-q}}{4}
\]
is $1$-exceptional for $m=3$.  
\end{theorem}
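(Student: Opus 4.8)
The plan is to handle the equivalence of the two descriptions of $p$ first, and then the substance — that such $p$ are $1$-exceptional for $m=3$ — freely using the paper's earlier results, since Theorem \ref{mainthrm} isolates exactly what is needed. \emph{The equivalence.} Writing $4p^2 = 3(2x+1)^2+1$ recasts $p^2 = 3x^2+3x+1$ as the Pell-type relation $(2p)^2-3(2x+1)^2=1$. The positive solutions of $u^2-3v^2=1$ are exactly $u+v\sqrt3=\gamma^q$, $q\ge 1$, with $\gamma=2+\sqrt3$ the fundamental one; as $\bar\gamma=\gamma^{-1}$ this gives $u=\tfrac12(\gamma^q+\gamma^{-q})$, hence $2p=\tfrac12(\gamma^q+\gamma^{-q})$, i.e. $p=\tfrac14(\gamma^q+\gamma^{-q})$, and conversely. (Parity of the $u_q$ forces $q$ odd, which is all one needs.) This part is routine.

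\emph{Reduction to a Fermat-quotient congruence.} By Theorem \ref{mainthrm}, $p$ is $1$-exceptional for $m=3$ if and only if $\lambda_p(\Q(\sqrt{-3}))>1$, so it suffices to prove the latter under the hypothesis. Gold's criterion for imaginary quadratic $K$ with $p$ split and $p\nmid h_K$ (here $h_K=1$) expresses this as $\pi^{p-1}\equiv 1\Mod{\bar\p^2}$, where $\pi$ generates $\p\mid p$ in $\Z[\omega]$ and $\Z[\omega]\hookrightarrow\Z_p$ via the other prime $\bar\p$ (so $\pi$ is a $p$-adic unit). Since $\pi$ is a root of $T^2-LT+p$ with $4p=L^2+27M^2$ and $\pi\equiv L\Mod{\bar\p}$, Hensel gives $\pi\equiv L-p/L\Mod{\bar\p^2}$, and expanding $\pi^{p-1}=(L-p/L)^{p-1}$ turns the criterion into the sharp congruence
\[
L^2\,q_p(L)\equiv -1\Mod p,\qquad q_p(L):=\tfrac1p\big(L^{p-1}-1\big).
\]
(Alternatively one can reach this congruence without the $\lambda$-invariant, from the Gauss--Wilson block decomposition of $\left(\tfrac{p^2-1}{3}\right)_p!$ modulo $p^2$ together with the Gross--Koblitz evaluation of the cubic Jacobi sum $J(\chi,\chi)=\pi$; this is also the source of the Glaisher-number reformulation of Corollary \ref{cor1.3}.)

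\emph{Invoking the hypothesis.} The Pell relation says $3(2x+1)$ is a square root of $-3$ modulo $p^2$, since $(3(2x+1))^2=3(4p^2-1)\equiv -3\Mod{p^2}$; equivalently, after multiplying by a suitable unit of $\Z[\omega]$, the element $2\pi^2\in\Z[\sqrt{-3}]$ has the shape $\pm1+(2x+1)\sqrt{-3}$. Comparing with $2\pi^2=\tfrac{L^2-27M^2}{2}+3LM\sqrt{-3}$ and its unit twists forces an \emph{exact} identity among $L,M,p$ — in the principal branch $L^2-27M^2=\pm2$, whence $L^2=2p\mp1$, and a twisted version otherwise — which pins down $L$ modulo $p^2$. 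From such an identity the binomial expansion of $L^{p-1}=(2p\mp1)^{(p-1)/2}$ (or its twist) yields $q_p(L)\bmod p$ explicitly, and one reads off $L^2q_p(L)\equiv-1\Mod p$; hence $\pi^{p-1}\equiv 1\Mod{\bar\p^2}$, hence $\lambda_p(\Q(\sqrt{-3}))>1$, hence $p$ is $1$-exceptional for $m=3$.

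\emph{Main obstacle.} The real work is twofold: first, fixing \emph{precisely} which Fermat-quotient congruence corresponds to $1$-exceptionality, tracking every sign and normalisation through Gold's criterion (or through the Gauss--Wilson/Gross--Koblitz bookkeeping); second, handling uniformly the several branches in the last step, since the hypothesis only says that \emph{some} unit twist of $2\pi^2$ lands in $\Z[\sqrt{-3}]$ with unit first coordinate — one must either identify the associate class of $\pi^2$ that the Pell equation actually forces, or check that every branch produces the same congruence. I expect this branch-bookkeeping, rather than any single step, to be the crux; a possible shortcut is to phrase the hypothesis as ``$2\pi^2$ is $\pm$ an algebraic conjugate of $1+(2x+1)\sqrt{-3}$'', compute $(\pi/\bar\pi)^2$ from it in $\Z_p$, and feed that into the criterion, reducing the obstacle to the $\Z_p$-arithmetic of this ratio.
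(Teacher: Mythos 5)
This theorem is quoted from Cosgrave--Dilcher \cite{cd1}; the paper gives no proof of it, so there is no internal argument to compare against, and your proposal must stand on its own. Its overall strategy is legitimate and non-circular: Theorem \ref{mainthrm} is proved independently of Theorem \ref{thrm5}, so reducing $1$-exceptionality for $m=3$ to $\lambda_p(\Q(\sqrt{-3}))>1$ and attacking the latter with Gold's criterion is allowed (though note it inverts the paper's own logic, which deduces Corollary \ref{corollary1} from Theorem \ref{thrm5}; you are in effect proving Corollary \ref{corollary1} directly). The Pell-equation equivalence and the reduction to $L^2q_p(L)\equiv -1\Mod{p}$ with $4p=L^2+27M^2$ are both correct. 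The genuine gap is in the step ``Invoking the hypothesis'': the asserted principal-branch identity $L^2-27M^2=\pm2$, hence $L^2=2p\mp1$, is simply false in general. It happens to hold for $p=13$ ($L=\pm5$, $M=\pm1$, $L^2=25=2p-1$), but for $p=181$ (a Cosgrave--Dilcher prime, $x=104$) one has $L=\pm7$, $M=\pm5$, so $L^2-27M^2=-626$ and $L^2=49\neq 2p\mp1$. Your binomial expansion of $(2p\mp1)^{(p-1)/2}$ therefore does not apply, and the ``twisted version otherwise'' is exactly the unresolved content; as written the proof does not go through.

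The shortcut you sketch in your final sentence is, however, the right move and closes the gap cleanly, avoiding $L$ and $M$ altogether. Since $p\equiv 1\Mod 2$ forces $\beta=\tfrac{1}{2}\bigl(1+(2x+1)\sqrt{-3}\bigr)\in\Z[\omega]$ with $N(\beta)=\tfrac{1}{4}\bigl(1+3(2x+1)^2\bigr)=p^2$, and $\beta$ is not divisible by $p$, unique factorization gives $\beta=u\pi^{2}$ or $u\bar{\pi}^{2}$ for a unit $u$ (with $u^{p-1}=1$ because $6\mid p-1$). Embedding into $K_{\bar{\p}}\cong\Q_p$, the Pell relation $3(2x+1)^2=4p^2-1$ gives $\bigl((2x+1)\sqrt{-3}\bigr)^2\equiv -3(2x+1)^2\equiv 1\Mod{p^2}$, so $(2x+1)\sqrt{-3}\equiv\pm1\Mod{p^2}$ and hence $\beta\equiv 1$ or $0\Mod{p^2}$; the value $0$ is excluded as $\beta$ is a unit at $\bar{\p}$. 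Thus $\pi^{2(p-1)}\equiv\beta^{p-1}u^{-(p-1)}\equiv 1\Mod{\bar{\p}^{2}}$, and since $\pi^{p-1}\equiv 1\Mod{\bar{\p}}$ lies in the order-$p$ kernel of reduction, this forces $\pi^{p-1}\equiv 1\Mod{\bar{\p}^{2}}$. Gold's criterion then gives $\lambda_p(\Q(\sqrt{-3}))>1$ and Theorem \ref{mainthrm} gives $1$-exceptionality. I would recommend replacing your branch analysis with this argument; as submitted, the main line of the proposal is incomplete at its decisive step.
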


\begin{definition}
We shall refer to the primes $p \equiv 1 \Mod 3$ such that $p^2 = 3x^2 + 3x + 1$ for some $x \in \Z$ as Cosgrave-Dilcher primes.
\end{definition}

\begin{remark}\label{rempell} In \cite{cd1} and \cite{cd} Cosgrave and Dilcher rearranged the equation $p^2 = 3x^2 + 3x + 1$ into $(2p)^2 - 3(2x + 1)^2 = 1$, which can be viewed as the Pell equation $X^2 - 3Y^2 = 1$.  It is from the theory of these equations that we obtain the primes $p = (\gamma^q + \gamma^{-q})/4$.  Also, $q$ is necessarily prime (see lemma 7 in \cite{cd1}).  It should be mentioned that the converse of Theorem \ref{thrm5} does not hold.  For example, $p = 76543$ is 1-exceptional for 3 but is not a Cosgrave-Dilcher prime ($p=76543$ is the only such example for $p<10^{12}$).  It is unknown whether or not there are infinitely many Cosgrave-Dilcher primes, and the question seems to be analogous to that of the infinitude of Fibonacci primes.  In a moment we will list some new Cosgrave-Dilcher primes (see Example \ref{cdremark}).
\end{remark}

%==========================================================================
%==========================================================================

\section{Proof of main Theorems}

In this section we will prove Theorem \ref{mainthrm} from which we immediately obtain as a Corollary:

\begin{corollary}\label{corollary1}
Let $p \equiv 1 \Mod 6$ be a Cosgrave-Dilcher prime.  Then $\lambda_p(\Q(\sqrt{-3})) >1$.
\end{corollary}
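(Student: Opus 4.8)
The plan is to deduce this statement by simply composing Theorem~\ref{mainthrm} with the theorem of Cosgrave and Dilcher recorded above as Theorem~\ref{thrm5}. First I would verify that the hypotheses of Theorem~\ref{mainthrm} are met for $K = \Q(\sqrt{-3})$: here $d = 3 \equiv 3 \Mod 4$, so in the notation of that theorem $D = 2d = 6$; the assumption $p \equiv 1 \Mod 6$ both guarantees that $p$ splits in $K$ (since then $p \equiv 1 \Mod 3$) and allows us to take the exponent $r = 1$, so that $p^r \equiv 1 \Mod D$ is satisfied. Moreover the class number of $\Q(\sqrt{-3})$ equals $1$, so the hypothesis that $p$ not divide $h_K$ is automatic. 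Thus Theorem~\ref{mainthrm} applies unconditionally to every prime $p \equiv 1 \Mod 6$ when $K = \Q(\sqrt{-3})$.

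Next, since $p$ is by hypothesis a Cosgrave--Dilcher prime with $p \equiv 1 \Mod 6$, it satisfies $p^2 = 3x^2 + 3x + 1$ for some $x \in \Z$, so Theorem~\ref{thrm5} applies directly and tells us that $p$ is $1$-exceptional for $m = 3$. Finally, the ``in particular'' clause of Theorem~\ref{mainthrm}, specialized to $d = 3$ --- namely the assertion that $\lambda_p(\Q(\sqrt{-3})) > 1$ if and only if $p$ is $1$-exceptional for $m = 3$ --- converts this into the desired conclusion $\lambda_p(\Q(\sqrt{-3})) > 1$.

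Because the corollary is a direct chaining of two results that are either proved earlier (Theorem~\ref{mainthrm}) or cited (Theorem~\ref{thrm5}, from \cite{cd1}), there is no genuine obstacle internal to its proof; all of the substance lies in Theorem~\ref{mainthrm}, which must be established first, and in Theorem~\ref{thrm5}. The only points that warrant attention are bookkeeping ones: confirming that $D = 6$ for $\Q(\sqrt{-3})$, that the congruence $p \equiv 1 \Mod 6$ furnishes an admissible exponent $r$ and forces $p$ to split in $K$, and that $p \nmid h_K$ holds trivially because $h_{\Q(\sqrt{-3})} = 1$. I would also note, for completeness, that Remark~\ref{rempell} already flags that the converse fails (e.g.\ $p = 76543$), so the corollary is a one-directional implication and nothing more should be claimed.
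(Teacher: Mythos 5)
Your proposal is correct and matches the paper's route exactly: the paper presents this corollary as an immediate consequence of Theorem~\ref{mainthrm} combined with Theorem~\ref{thrm5}, which is precisely your chaining argument. The hypothesis checks you perform ($D=6$, $r=1$, $h_{\Q(\sqrt{-3})}=1$) are the right bookkeeping and are implicit in the paper's treatment.
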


\begin{example}\label{cdremark}
Using Corollary \ref{corollary1} we may add to the non-trivial primes of $\Q(\sqrt{-3})$ in Table \ref{table1} by searching for Cosgrave-Dilcher primes.  The following table contains $p = (\gamma^q + \gamma^{-q})/4$ with $q \leq 79$:
\begin{center}
\begin{tabular}{ | m{3em} | m{10cm}|}
 \hline
 $q=3$ & $p = 13 $ \\
 \hline
 $q=5$ &  $p = 181$  \\
 \hline
$q=7$ & $p= 2521$    \\
 \hline
 $q=11$ &  $p = 489061$  \\
\hline
 $q=13$ & $p = 6811741$  \\
 \hline
$q=17 $ &  $p = 1321442641$  \\
 \hline
 $q=19$  &  $p = 18405321661$  \\
 \hline
 $q=79$ & $p = 381765135195632792959100810331957408101589361$  \\
\hline
\end{tabular}
\end{center}
One may further verify using any standard CAS that the primes $79< q \leq 10,000$ giving $1$-exceptional primes $p  = (\gamma^q + \gamma^{-q})/4$ for $m = 3$ (and therefore non-trivial primes of $\Q(\sqrt{-3})$) are $q = 151$, $199$, $233$, $251$, $317$, $863$, $971$, and $q = 3049$, $7451$, and $7487$ giving probable primes $p$ (the non-trivial probable prime corresponding to $q = 7487$ is 4282 digits long). 
\end{example}

 Let $d$ be a square-free integer, $K = \Q(\sqrt{-d})$, $D = 2d$ if $d \equiv 3 \Mod 4$ and $D = 4d$ otherwise.  Let $p>2$ be a prime such that $p\equiv 1 \Mod D$ with $p\mO_K = \frak{p}\bar{\frak{p}}$ and $\mathcal{P}$ be a prime in $\Q(\zeta_{D})$ above $\frak{p}$, where $\zeta_D$ is a primitive $D$-th root of unity.  Let $\bar{\mathcal{P}}$ be the complex conjugate of $\mathcal{P}$.  Denote $G = \Gal(\Q(\zeta_{D})/\Q)$ and $\chi_K = \chi$ to be the imaginary quadratic character for $K$.  We have for $x \in \Q(\zeta_D)$
\[
\Nrm_{\Q(\zeta_{D})/K}(x) = \prod_{\mathclap{i = 1 \atop {\chi(i) = 1 \atop \gcd(i,D) = 1}}}^{D} \sigma_{i}(x) \in K
\]
where $\sigma_i \in G$ acts by $\sigma_i (\zeta_{D} )=  \zeta_D^{i}$.  We will also denote $\mathcal{P}_i = \sigma_i(\mathcal{P})$ so that $\Nrm_{\Q(\zeta_D)/K}(\mathcal{P}_i) = \frak{p}$.  We will now work towards proving the following result from which Theorem \ref{mainthrm} will follow. 

\begin{theorem}\label{gaussthrm}
Let $K = \Q(\sqrt{-d})$ be any imaginary quadratic field, and $D$ be as above.  Let $p$ be a prime and $r \in \Z^+$ such that $p^r \equiv 1 \Mod D$, $p \nmid h_K$, and $p \neq 3$ whenever $\chi_K(2) = -1$ and $K \neq \Q(\sqrt{-3})$.  Then,
\[
\lambda_p(K) > 1 \iff  \left(\prod^{D}_{i = D/2\atop {\chi(i) = 1 \atop \gcd(i, D) = 1}} \frac{\left((D-i) \frac{p^{2r} -1}{D}\right)^2_p!}{\left((D-i) \frac{p^{2r}-1}{D/2}\right)_p!} \prod^{D/2}_{i = 1\atop {\chi(i) = 1 \atop \gcd(i, D) = 1}} \frac{\left(i \frac{p^{2r} -1}{D/2}\right)_p!}{\left(i \frac{p^{2r} -1}{D}\right)^2_p!}\right)^{p-1} \equiv 1 \Mod{p^2}.
\]
\end{theorem}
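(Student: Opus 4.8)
The plan is to translate the condition $\lambda_p(K) > 1$ into a statement about a certain $p$-adic $L$-value or, more concretely, about a Gauss sum / Jacobi sum attached to the character $\chi_K$ modulo $p^2$, and then to unwind that Gauss sum into the stated product of Gauss factorials. The starting point is the standard fact (Iwasawa theory of imaginary quadratic fields, cf. the references \cite{dfks}, \cite{gold}): since $p$ splits in $K$ as $p\mathcal{O}_K = \mathfrak{p}\bar{\mathfrak{p}}$ and $p \nmid h_K$, one has $\lambda_p(K) \geq 1$ automatically, and $\lambda_p(K) > 1$ is governed by whether a certain ``Gross-Koblitz''-type unit — essentially the image in $(\mathcal{O}_K/\mathfrak{p}^2)^\times$ of a Gauss/Jacobi sum built from the norm map $\Nrm_{\Q(\zeta_D)/K}$ applied to a Gauss sum over $\F_p$ — is trivial modulo $\mathfrak{p}^2$. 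Concretely I would first establish: $\lambda_p(K) > 1$ if and only if the principal ideal generated by $\prod_{\chi(i)=1} \sigma_i(g)$, where $g = g(\psi)$ is a Gauss sum for a character $\psi$ of order dividing $D$ over $\F_{p^{2r}}$ (or equivalently a Jacobi-sum product), has trivial image in $(1 + \mathfrak{p}\mathcal{O}_K)/(1+\mathfrak{p}^2\mathcal{O}_K)$, which in turn (taking $(p-1)$-th powers to kill the Teichmüller part and land in $1 + p\Z_p$) is the congruence $(\cdots)^{p-1} \equiv 1 \Mod{p^2}$.

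Next I would use the classical formula expressing Gauss sums (or rather the relevant Jacobi-sum quotients) over $\F_{p^{2r}}$ in terms of Gauss factorials — this is the Gross–Koblitz / Gauss's evaluation of $\prod_{\gcd(i,D)=1} i$ over intervals of the form $\left(\tfrac{j(p^{2r}-1)}{D}, \tfrac{(j+1)(p^{2r}-1)}{D}\right]$, combined with Wilson-quotient bookkeeping. The key algebraic identity to prove is that
\[
\prod_{\mathclap{i=1,\ \chi(i)=1,\ \gcd(i,D)=1}}^{D} \sigma_i\!\left(\text{Gauss sum factor}\right) \equiv \prod^{D}_{i = D/2\atop {\chi(i) = 1 \atop \gcd(i, D) = 1}} \frac{\left((D-i) \frac{p^{2r} -1}{D}\right)^2_p!}{\left((D-i) \frac{p^{2r}-1}{D/2}\right)_p!} \prod^{D/2}_{i = 1\atop {\chi(i) = 1 \atop \gcd(i, D) = 1}} \frac{\left(i \frac{p^{2r} -1}{D/2}\right)_p!}{\left(i \frac{p^{2r} -1}{D}\right)^2_p!} \Mod{p^2},
\]
up to a root of unity that is killed by the exponent $p-1$. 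The splitting of the range $1 \le i \le D$ into the two sub-ranges $1 \le i \le D/2$ and $D/2 \le i \le D$, together with the reflection $i \mapsto D-i$ (which corresponds to complex conjugation $\mathcal{P} \mapsto \bar{\mathcal{P}}$ and the fact that $\chi(-1) = -1$ so that $\chi(i) = 1 \iff \chi(D-i) = -1$), is exactly what produces the asymmetric-looking numerator/denominator pattern; the $(\ )^2_p!$ versus $(\ )_p!$ (i.e. modulus $D$ versus modulus $D/2 = d$) discrepancy comes from the $2$-part of $D$ and is the source of the hypothesis $p \neq 3$ when $\chi_K(2) = -1$, $K \neq \Q(\sqrt{-3})$ — there one needs an auxiliary congruence relating the mod-$D$ and mod-$d$ Gauss factorials that can fail for the smallest prime. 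I would handle this via the Gauss–Wilson relation $\left(\tfrac{p^n-1}{2}\right)_{p}! \equiv \pm 1 \Mod{p^n}$ and its refinements.

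The main obstacle, I expect, is the precise $\Mod{p^2}$ (as opposed to $\Mod p$) bookkeeping in passing from the Iwasawa-theoretic characterization of $\lambda_p(K) > 1$ to the explicit Gauss-factorial product: one must track not only the Gauss sum itself but the exact power of $\mathfrak{p}$ dividing $g(\psi) - (\text{its Teichmüller approximation})$, and show that the ambiguous roots of unity and the contributions of the archimedean/conjugate places all cancel once the $(p-1)$-st power is taken. A secondary technical point is justifying the reduction from a Gauss sum over $\F_{p^{2r}}$ (needed because we only assume $p^r \equiv 1 \Mod D$, not $p \equiv 1 \Mod D$) down to a congruence that only ``sees'' information modulo $p^2$ in $\Z$; this should follow from a Hasse–Davenport-type norm-compatibility together with the observation that $r$ is chosen minimally and the relevant $L$-value lives in $K$ itself. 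Once the identity above is in hand, combining it with the characterization of $\lambda_p(K) > 1$ yields the theorem.
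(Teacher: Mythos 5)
Your overall route matches the paper's: reduce $\lambda_p(K)>1$ to a congruence on a generator of a power of the prime above $p$ (Gold's criterion), realize such a generator as a product of conjugate Jacobi sums, and convert those into Gauss factorials via the Gross--Koblitz formula (the needed congruence $J(\psi^{-i}) \equiv \bigl(i\tfrac{p^{2r}-1}{D/2}\bigr)_p!\,\big/\,\bigl(i\tfrac{p^{2r}-1}{D}\bigr)^2_p! \Mod{p^2\Z_p}$ is (9.3.6) in Berndt--Evans--Williams and can simply be quoted rather than re-derived by interval bookkeeping). However, there is a genuine gap at the middle step. Gold's criterion applies to a generator $\alpha$ of $\frak{p}^{h_K}$ specifically, and the Jacobi-sum product you write down does not generate $\bar{\frak{p}}^{h_K}$; you never compute \emph{which} power of $\bar{\frak{p}}$ it generates. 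The paper does this (Proposition 3.3) using the factorization $J(\psi^i)\mO_{\Q(\zeta_D)} = \prod_{j\in S_i(D)}\mathcal{P}_{j^{-1}}$, the relation $J(\psi^i)J(\psi^{-i})=p$, and the class number formula $a^+-a^- = \pm h_K(2-\chi(2))$ counting residues versus nonresidues in $(0,D/2)$: the product generates $\bar{\frak{p}}^{\pm h_K(2-\chi(2))}$. Transferring the congruence from a generator of $\bar{\frak{p}}^{\pm h_K(2-\chi(2))}$ to the generator of $\frak{p}^{h_K}$ that Gold's criterion requires is exactly where the hypotheses $p\nmid h_K$ and $p\nmid(2-\chi(2))$ enter; since $2-\chi(2)=3$ when $\chi(2)=-1$, this is the true source of the condition $p\neq 3$. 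Your proposal misattributes that hypothesis to an auxiliary Gauss--Wilson congruence between mod-$D$ and mod-$d$ factorials, which signals that the exponent computation is missing from your argument; without it the stated equivalence is not established in either direction.

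A secondary inaccuracy: the congruence in Gold's criterion is $\alpha^{p-1}\equiv 1 \Mod{\bar{\frak{p}}^2}$ for $\frak{p}^{h_K}=(\alpha)$, i.e.\ the generator of a power of one prime is tested modulo the square of the \emph{conjugate} prime (equivalently, inside $K_{\frak{p}}\cong\Q_p$ one tests a generator of a power of $\bar{\frak{p}}$, which is a $p$-adic unit there, modulo $p^2\Z_p$). Your phrasing tests the element in $(\mO_K/\frak{p}^2)^\times$ against the same prime it generates, which is not meaningful as stated. The roots-of-unity ambiguity you flag is real but easy: the discrepancy is a unit $u\in\mO_K^\times$ with $u^{p-1}=1$, so it disappears after raising to the $(p-1)$-st power.
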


The first step of the proof is to write $\bar{\frak{p}}$ in terms of Jacobi sums.  Consider the multiplicative character $\psi : \mO_{\Q(\zeta_D)}/ \mathcal{P} \to \C^{\times}$ of order $D$ modulo $\mathcal{P}$.  We denote 
\[
J(\psi) = \sum_{a \in \F_{p}} \psi(a) \psi(1-a)
\]
to be the Jacobi sum for $\psi$.  Denote $0 \leq L(j) < D$ to be reduction of $j$ modulo $D$, and  for $1 \leq i < D/2$ we define 
\[
S_i(D) = \{ j \,:\, 0< j < D; \,\gcd(j,D) = 1;\, L(ji) <  D/2 \}.
\]
Then from Theorem 2.1.14 in \cite{bew} we have 
\[
J(\psi^i)\mO_{\Q(\zeta_{D})} = \prod_{j \in S_i(D)} \mathcal{P}_{j^{-1}}.
\]

\begin{proposition}\label{propfrakp}
Denote $h_K = h$ to be the class number for $K = \Q(\sqrt{-d})$.  With the notation fixed above, we have
\[
\bar{\frak{p}}^{t}= \left(\hskip6mm \prod^{D}_{\mathclap{i = D/2\atop {\chi(i) = 1 \atop \gcd(i, D) = 1}}} J(\psi^{ i}) \middle / \prod^{D/2}_{\mathclap{i = 1\atop{ \chi(i) = 1 \atop \gcd(i,D) = 1}}} J(\psi^{-i})\right) \mO_{\Q(\zeta_{D})}
\]
where $t = \pm h(2 - \chi(2))$ if $d \neq 1$ or $3$, else $t = \pm 1$.  The sign of $t$ depends on the number of quadratic residues modulo $D$ between $1$ and $D/2$.
\end{proposition}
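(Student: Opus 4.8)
The plan is to combine the prime factorization of Jacobi sums recalled just before the statement with the known decomposition of $\bar{\frak p}$ in the genus-theoretic sense, and then take norms down from $\Q(\zeta_D)$ to $K$. First I would record that, since $p \equiv 1 \Mod D$ splits completely in $\Q(\zeta_D)$, the prime $\mathcal P$ and all its conjugates $\mathcal P_j = \sigma_j(\mathcal P)$ are degree-one primes, and $\Nrm_{\Q(\zeta_D)/K}(\mathcal P_i) = \frak p$ for every $i$ with $\chi(i) = 1$, while $\Nrm_{\Q(\zeta_D)/K}(\mathcal P_i) = \bar{\frak p}$ for $\chi(i) = -1$ (this is exactly how $\sigma_i$ was set up: it fixes $K$ iff $\chi(i) = 1$). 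Consequently, applying $\Nrm_{\Q(\zeta_D)/K}$ to the identity $J(\psi^i)\mO_{\Q(\zeta_D)} = \prod_{j \in S_i(D)} \mathcal P_{j^{-1}}$ turns the right-hand side into $\frak p^{a_i} \bar{\frak p}^{b_i}$, where $a_i = \#\{ j \in S_i(D) : \chi(j) = 1\}$ and $b_i = \#\{ j \in S_i(D) : \chi(j) = -1\}$. So $\Nrm_{\Q(\zeta_D)/K}(J(\psi^i))$ is, up to units, $\frak p^{a_i}\bar{\frak p}^{b_i}$, and $|J(\psi^i)|^2 = p$ forces $a_i + b_i = \#S_i(D)$, with $\#S_i(D)$ a fixed count independent of $i$ (half the residues coprime to $D$, roughly $\phi(D)/2$).

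Next I would form the specific product of Jacobi sums appearing in the statement, namely $P := \prod_{i} J(\psi^i)$ over $D/2 \le i < D$ with $\chi(i) = 1$, divided by $\prod_i J(\psi^{-i})$ over $1 \le i \le D/2$ with $\chi(i) = 1$, and compute $\Nrm_{\Q(\zeta_D)/K}(P)$ using the per-$i$ formula above. The point is that the genus/character orthogonality should make the exponent of $\frak p$ in this product vanish and leave only a power of $\bar{\frak p}$: summing $b_i - a_i$ (or its analogue for the $\psi^{-i}$ factors, where $S_i$ is replaced by its complement under $j \mapsto D - j$) over the indicated index set collapses to a single multiple of $\#S_i(D) \equiv \phi(D)/2$, and the dependence on the detailed combinatorics of $S_i(D)$ drops out after one reorganizes the double sum $\sum_i \sum_{j} \chi(j)[\,L(ij) < D/2\,]$ by swapping the roles of $i$ and $j$ and using that $\chi$ is the quadratic character. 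The resulting exponent is (up to sign) $h(2 - \chi(2))$ times the generator, because $\Nrm_{\Q(\zeta_D)/K}(J(\psi^i))$ lands in $K$ and $\frak p^h = (\pi)$ is principal: raising to the $h$-th power (which is what forces appearance of $h$ in $t$) and absorbing the factor $2 - \chi(2) = \#S_i(D)$-type normalization gives $\bar{\frak p}^{t}$ on the nose. For $d = 1, 3$ the unit group of $\mO_K$ is larger and $h = 1$, which is why one can take $t = \pm 1$ directly without the $2 - \chi(2)$ factor.

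The sign of $t$ I would pin down by tracking units: $\Nrm_{\Q(\zeta_D)/K}$ of a Jacobi sum is an algebraic integer of absolute value a power of $\sqrt p$ generating $\frak p^{a_i}\bar{\frak p}^{b_i}$, so it equals $\pm \pi^{a_i}\bar\pi^{b_i}$ (times a root of unity when $d = 1, 3$), and assembling the product $P$ the accumulated sign is governed by how many of the indices $i$ in the range $1 \le i < D/2$ are quadratic residues mod $D$ — each such index contributes a controlled $\pm 1$ from the reflection $j \mapsto D - j$ interchanging $S_i(D)$ with its complement. I would make this precise by the standard relation $J(\psi^i)\overline{J(\psi^i)} = p$ together with $J(\bar\psi) = \chi(-1)\overline{J(\psi)}$ (here $\chi(-1) = -1$ since $K$ is imaginary), which is the source of the sign flips. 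The main obstacle I anticipate is exactly this bookkeeping: proving that the $\frak p$-exponent in $\Nrm_{\Q(\zeta_D)/K}(P)$ is zero requires a clean combinatorial identity for $\sum_{i} \bigl(\#\{j \in S_i(D): \chi(j) = 1\} - \#\{j \in S_i(D): \chi(j) = -1\}\bigr)$ over the prescribed index set, and getting the constant $2 - \chi(2)$ (as opposed to some other small multiple) out of it — along with the correct sign — is where all the care goes; everything else is formal manipulation of the Jacobi-sum factorization and norm functoriality.
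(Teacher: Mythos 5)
Your outline identifies the right raw ingredients (the factorization $J(\psi^i)\mO_{\Q(\zeta_D)}=\prod_{j\in S_i(D)}\mathcal P_{j^{-1}}$, the behaviour of $\Nrm_{\Q(\zeta_D)/K}$ on the conjugates $\mathcal P_j$), but there are two genuine gaps. First, you compute $\Nrm_{\Q(\zeta_D)/K}(P)$, whereas the proposition is a statement about the ideal generated by $P$ \emph{in} $\mO_{\Q(\zeta_D)}$. Writing $(P)=\prod_j\mathcal P_j^{e_j}$, the norm only records the two sums $\sum_{\chi(j)=1}e_j$ and $\sum_{\chi(j)=-1}e_j$; it cannot distinguish $\bar{\frak p}^{t}\mO_{\Q(\zeta_D)}$ from other fractional ideals with the same norm, so even a successful norm computation would not prove the claim. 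Second, the ``clean combinatorial identity'' you defer is not a bookkeeping afterthought — it is the entire content, and your plan gives no argument for it. Relatedly, your explanation of where $h(2-\chi(2))$ comes from (principalizing $\frak p^h=(\pi)$ and ``raising to the $h$-th power'') is off the mark: no principalization occurs in this proposition; the factor enters through the analytic class number formula $\pm h(2-\chi(2))=a^+-a^-$, where $a^{\pm}$ count residues $0<j<D/2$ coprime to $D$ with $\chi(j)=\pm1$. Likewise the ``sign of $t$'' is the sign of the integer $a^+-a^-$, not a unit sign of some element $\pm\pi^{a}\bar\pi^{b}$.

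The paper's proof avoids all of this with one multiplicative trick you did not use structurally: since $J(\psi^i)J(\psi^{-i})=p$, multiplying numerator and denominator of $P$ by $\prod_{D/2\le i\le D,\;\chi(i)=1}J(\psi^{-i})$ turns the numerator into $p^{a^-}$ and the denominator into the full norm $N(J(\psi^{-1}))=\prod_{\chi(i)=1}\sigma_i(J(\psi^{-1}))$. This shows $P$ is literally an element of $K$, so the ideal it generates in $\mO_{\Q(\zeta_D)}$ is automatically extended from $\mO_K$ — exactly the point your norm-based framing loses. The only factorization then needed is the single, easy one $N(J(\psi^{-1}))\mO=\frak p^{a^-}\bar{\frak p}^{a^+}$ (here $S_{-1}(D)$ is just $\{D/2<j<D\}$, so no combinatorics over general $S_i(D)$ arise), giving $(P)=(\frak p\bar{\frak p})^{a^-}/(\frak p^{a^-}\bar{\frak p}^{a^+})=\bar{\frak p}^{a^--a^+}=\bar{\frak p}^{\mp h(2-\chi(2))}$. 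If you want to salvage your route, you would have to compute each exponent $e_k$ individually from the sets $S_i(D)$, which is substantially more work than the paper's two-line manipulation.
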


\begin{proof}
Denote 
\[
a^+ =\# \{ 0< j < D/2\:\, \gcd(j,D) = 1, \, \chi(j) = 1\} 
\]
\[
 a^- =\# \{ 0< j < D/2\:\, \gcd(j,D) = 1, \, \chi(j) = - 1\}.
\]
It is well known that $\pm h = (a^+ - a^-)/(2 - \chi(2))$ when $d$ is not $1$ or $3$ (it is easy to see what happens in those cases, so we will assume $d > 3$).  If $N$ is the norm from $\Q(\zeta_D)$ to $K$ then 
\[
N(J(\psi^{-1}))\mO_{\Q(\zeta_{D})} = \prod_{j = 1 \atop \gcd(j,D) =1}^{D/2} N(\mathcal{\bar{P}}_{j^{-1}}) = \frak{p}^{a^-} \bar{\frak{p}}^{a^+}
\]
and also $J(\psi^i)J(\psi^{-i}) = p$.  Then the ideal
\begin{align*}
\left(\hskip 6mm \prod^{D}_{\mathclap{i = D/2\atop {\chi(i) = 1 \atop \gcd(i, D) = 1}}} J(\psi^{ i}) \middle / \prod^{D/2}_{\mathclap{i = 1\atop{ \chi(i) = 1 \atop \gcd(i,D) = 1}}} J(\psi^{-i}) \right) & =  \left(\hskip 6mm \prod^{D}_{\mathclap{i = D/2\atop {\chi(i) = 1 \atop \gcd(i, D) = 1}}} J(\psi^{ i}) \prod^{D}_{\mathclap{i = D/2\atop {\chi(i) = 1 \atop \gcd(i, D) = 1}}} J(\psi^{-i}) \middle / \prod^{D/2}_{\mathclap{i = 1\atop{ \chi(i) = 1 \atop \gcd(i,D) = 1}}} J(\psi^i) \prod^{D}_{\mathclap{i = D/2\atop {\chi(i) = 1 \atop \gcd(i, 2m) = 1}}} J(\psi^{-i})\right)
\\
&= \left(\hskip6mm \prod_{\mathclap{i = D/2\atop {\chi(i) = 1 \atop \gcd(i, D) = 1}}}^{D} p \middle / N(J(\psi^{-1})) \right) = \frac{(\frak{p}\bar{\frak{p}})^{ a^-}}{\frak{p}^{a^-}\bar{\frak{p}}^{a^+}} =\bar{ \frak{p}}^{\pm h(2 - \chi(2))}.
\end{align*}
\end{proof}

Theorem \ref{gaussthrm} will now follow from Gold's criterion:

\begin{theorem}[Gold's criterion (Theorem 4 in \cite{gold})] \label{gold}
Let $K$ be an imaginary quadratic field, and $p > 2$ be a prime such that $p$ does not divide the class number $h_K$ of $K$.  
\begin{itemize}
\item[i.] If $p$ splits in $K$ then $\lambda_p(K) > 0$.
\item[ii.] Suppose $p\mO_K = \frak{p}\bar{\frak{p}}$ and write $\frak{p}^{h_K} = (\alpha)$. Then $\lambda_p(K) > 1$ if and only if $\alpha^{p-1} \equiv 1 \Mod{\bar{\frak{p}}^2}$.
\end{itemize}
\end{theorem}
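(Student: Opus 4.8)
The plan is to pass to the Iwasawa module $X = \varprojlim_n A_n$ of the cyclotomic tower $K_\infty/K$, where $A_n$ denotes the $p$-part of the ideal class group of the $n$-th layer $K_n$; by the Ferrero--Washington theorem $\mu_p(K) = 0$, so $\lambda_p(K) = \operatorname{rank}_{\Z_p} X$, and both parts become statements about the $p$-adic growth of $|A_n|$. The entire argument runs on Chevalley's ambiguous class number formula, applied one layer at a time, fed by local class field theory at the two primes above $p$. The relevant local input is that, since $p$ splits, $\frak{p}$ and $\bar{\frak{p}}$ are totally ramified in $K_\infty/K$ with completions $K_{\frak{p}} = K_{\bar{\frak{p}}} = \Q_p$, and the localization of the tower at either prime is the totally ramified cyclotomic $\Z_p$-extension of $\Q_p$. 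I also record that $\mO_K^\times$ is finite of order prime to the odd split prime $p$, so unit contributions over the base field $K$ are $p$-adically trivial.

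For (i), I would apply the ambiguous class number formula to $K_1/K$, in $p$-parts,
\[
|A_1^{\Gal(K_1/K)}| = \frac{|A_0|\cdot\prod_v e_v}{[K_1:K]\,\big[\mO_K^\times : \mO_K^\times\cap\Nrm_{K_1/K}K_1^\times\big]}.
\]
Here $|A_0| = 1$ since $p\nmid h_K$, the ramification product is $e_{\frak{p}}e_{\bar{\frak{p}}} = p^2$ (the infinite places are unramified because $K$ is totally complex), the degree is $p$, and the unit index is $p$-adically trivial by the remark above. Hence $|A_1^{\Gal(K_1/K)}| = p$, so $A_1\neq 0$, and iterating the same formula up the tower shows $|A_n|\to\infty$; therefore $\lambda_p(K) > 0$.

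For (ii), the point is that because $A_0 = 0$ the invariant is already visible at finite level: the standard computation gives $|A_1| = p$, minimal growth $\lambda_p(K) = 1$ then forces $|A_2| = p^2$, while $\lambda_p(K)\ge 2$ forces $|A_2| > p^2$. I would detect this dichotomy by applying the ambiguous class number formula to the second layer $K_2/K_1$ and isolating the single term capable of producing the extra factor of $p$, namely the norm index deciding whether the class of the ramified prime $\frak{p}_1\subset K_1$ above $\frak{p}$ is a norm from $K_2$. By the reciprocity law this global norm condition is equivalent to a purely local condition at the complementary prime $\bar{\frak{p}}$: writing $\frak{p}^{h_K} = (\alpha)$, so that $\alpha$ is a unit at $\bar{\frak{p}}$, the obstruction to extra growth vanishes exactly when $\alpha$ is a local norm from the first cyclotomic layer at $\bar{\frak{p}}$. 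The final step is an explicit local computation: the first layer of the cyclotomic $\Z_p$-extension of $\Q_p$ has norm group $\langle p\rangle\,\mu_{p-1}\,(1+p^2\Z_p)$, so a $p$-adic unit is a local norm iff it is a $p$-th power iff its $(p-1)$-st power lies in $1+p^2\Z_p$; since $\mO_K/\bar{\frak{p}}^2\cong\Z/p^2$, this reads $\alpha^{p-1}\equiv 1\Mod{\bar{\frak{p}}^2}$, which is precisely the asserted congruence.

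The main obstacle will be the second-layer step. Chevalley's formula for $K_2/K_1$ involves the unit group of $K_1$, which for an imaginary quadratic base has $\Z$-rank $p-1$, so I must verify that these units introduce no spurious $p$-adic factor and that the only term separating $|A_2| = p^2$ from $|A_2| > p^2$ is the norm symbol of $\frak{p}_1$. Equivalently, the delicate point is the reciprocity argument that transfers the obstruction from $\frak{p}$, the prime carrying the growing class, to the complementary prime $\bar{\frak{p}}$ at which $\alpha$ is actually tested; once that transfer is justified, the identification with the congruence is the routine local norm computation above.
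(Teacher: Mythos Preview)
The paper does not prove this theorem at all: it is quoted verbatim as ``Theorem 4 in \cite{gold}'' and then immediately applied as a black box in the proof of Theorem \ref{gaussthrm}. There is therefore nothing in the paper to compare your argument against; what you have written is a sketch of how Gold's original result might be established, not a reconstruction of anything the present paper does.

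As a standalone sketch your outline for (i) is essentially the standard genus-theory argument and is fine. For (ii), however, you have correctly identified the two genuine difficulties and then left both of them open. First, the unit group $\mO_{K_1}^\times$ has $\Z$-rank $p-1$, and you give no argument that the index $[\mO_{K_1}^\times : \mO_{K_1}^\times\cap\Nrm_{K_2/K_1}K_2^\times]$ contributes trivially; this is not automatic and in fact is where most of the work in Gold's paper (and in later treatments) sits. Second, your ``reciprocity argument'' transferring the obstruction from $\frak{p}$ to $\bar{\frak{p}}$ is asserted but not carried out; the product formula for norm residue symbols does give such a transfer, but only after one has controlled the symbols at all the other places, which again requires the unit computation you have deferred. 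So as written this is a plausible plan with the hard step missing, not a proof.
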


\begin{proof}[Proof of Theorem \ref{gaussthrm}]
Let $r \in \Z^+$ such that $p^r \equiv 1 \Mod{D}$.  Working inside the localization $K_{\frak{p}} \cong \Q_p$, we have  $J(\psi^{-i}) \equiv \frac{\left(i \frac{p^{2r} -1}{D/2}\right)_p!}{\left(i \frac{p^{2r} -1}{D}\right)^2_p!} \Mod{p^2\Z_p}$ from (9.3.6) in \cite{bew} (which is essentially the Gross-Koblitz formula).
The result now follows from Proposition \ref{propfrakp} and Gold's criterion \ref{gold}.
\end{proof}

We will see that the condition in Theorem \ref{gaussthrm} becomes more compact for a certain family of imaginary quadratic fields.

\begin{definition}\label{maximal}
Let $\chi_K = \chi$ be the imaginary quadratic character for $K$ and $D$ be as above.  We say that $K$ has maximal class number if $\chi_K(i) = 1$ for each $i$ co-prime to $D$ and $1 \leq i \leq D/2$.
\end{definition}

If $h_K$ is the class number for $K \neq \Q(i)$ or $ \Q(\sqrt{-3})$, we have that $(2 - \chi(2))h_K = \left|\sum_{i = 1}^{D/2} \chi(i)\right|$.  Then $\chi(i) = 1$ for each $i$ co-prime to $D$ and $1 \leq i \leq D/2$ if and only if $h_K = \phi(D)/2(2 - \chi(2))$.  

\begin{theorem}\label{maximalthrm}
Suppose $r \in \Z^+$ such that $p^r \equiv 1 \Mod{D}$ and all other notation is as above.  If $K$ has maximal class number, and $p\nmid h_K$, then
\begin{align*}
\lambda_p(K) > 1 \iff  \left(\frac{\left( \frac{p^{2r} -1}{D}\right)^2_p!}{\left( \frac{p^{2r}-1}{D/2}\right)_p!}\right)^{p-1} \equiv 1 \Mod{p^2}.
\end{align*}
\end{theorem}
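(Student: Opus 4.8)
The plan is to specialize Theorem \ref{gaussthrm} to the case where $K$ has maximal class number, and show that essentially all of the factors in the big product on the right-hand side collapse to a single ratio. The key observation is that, by Definition \ref{maximal}, the condition $\chi(i) = 1$ is automatic for every $i$ coprime to $D$ with $1 \le i \le D/2$, so the two inner products in Theorem \ref{gaussthrm} run over \emph{all} such $i$ with no constraint from $\chi$. Thus I would first rewrite the product appearing in Theorem \ref{gaussthrm} as
\[
\prod_{\substack{D/2 \le i \le D \\ \gcd(i,D)=1}} \frac{\left((D-i)\tfrac{p^{2r}-1}{D}\right)^2_p!}{\left((D-i)\tfrac{p^{2r}-1}{D/2}\right)_p!} \cdot \prod_{\substack{1 \le i \le D/2 \\ \gcd(i,D)=1}} \frac{\left(i\tfrac{p^{2r}-1}{D/2}\right)_p!}{\left(i\tfrac{p^{2r}-1}{D}\right)^2_p!}.
\]

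Next I would reindex the first product by the substitution $j = D - i$, which sends $\{D/2 \le i \le D : \gcd(i,D)=1\}$ bijectively onto $\{0 \le j \le D/2 : \gcd(j,D)=1\}$. Under this reindexing the first product becomes $\prod_{\substack{0 \le j \le D/2 \\ \gcd(j,D)=1}} \big(j\tfrac{p^{2r}-1}{D}\big)^2_p! \big/ \big(j\tfrac{p^{2r}-1}{D/2}\big)_p!$, which (apart from the $j$ corresponding to the endpoints) is exactly the reciprocal of the second product. So the two products telescope against one another, and what survives is only the boundary term $j=D$ (equivalently $i = D/2$ in the original first product, or rather the term that is \emph{not} matched): the factors indexed by $i=D/2$ and $i=D$ in the first product, against $i=D/2$ in the second. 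Being careful about exactly which indices are and are not cancelled — this bookkeeping at the endpoints is where I expect the one real subtlety to lie — the net effect is that the entire expression reduces to a single ratio, which after the cancellation is
\[
\frac{\left(\tfrac{p^{2r}-1}{D}\right)^2_p!}{\left(\tfrac{p^{2r}-1}{D/2}\right)_p!}
\]
(the $i=1$ term, with the $D/2 \le i \le D$ contributions all matched away). I would also note that the hypothesis $p \neq 3$ when $\chi_K(2) = -1$ and $K \neq \Q(\sqrt{-3})$, needed in Theorem \ref{gaussthrm}, is harmless here: the five fields of interest are handled by Theorem \ref{mainthrm} separately, and in any case $p \equiv 1 \Mod D$ with $D \ge 4$ forces $p \ge 5$.

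The main obstacle is purely combinatorial: making the telescoping argument rigorous requires pinning down precisely which residues $i$ coprime to $D$ lie in each half-interval, and tracking the single unmatched index through the reindexing $i \mapsto D-i$ without an off-by-one error. Once that is done, raising both sides to the $(p-1)$ power and reducing modulo $p^2$ gives the stated equivalence directly from Theorem \ref{gaussthrm}. No new number-theoretic input is needed beyond what is already proved; the content of Theorem \ref{maximalthrm} is entirely that the maximality hypothesis forces the product in Theorem \ref{gaussthrm} to simplify in this way.
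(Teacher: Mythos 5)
There is a genuine gap here, and it sinks the telescoping strategy. Your premise that the constraint $\chi(i)=1$ is ``automatic'' for both inner products is wrong for the first one: $\chi$ is an odd character ($\chi(-1)=-1$), so if $\chi(j)=1$ for every $j$ coprime to $D$ with $1\le j\le D/2$ (the maximality hypothesis), then $\chi(i)=\chi(-1)\chi(D-i)=-1$ for every coprime $i$ with $D/2<i<D$. Hence the first product in Theorem \ref{gaussthrm} is \emph{empty}, while the second runs over all $\phi(D)/2$ coprime residues in $(0,D/2)$; nothing cancels, and already for $D=8$ you are left with two distinct factors ($i=1$ and $i=3$), not one. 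The proposal is also internally inconsistent: if both products really did run over all coprime residues in their half-intervals, the substitution $j=D-i$ would match the two index sets \emph{exactly} (the endpoints $i=D/2$ and $i=D$ are never coprime to $D$, so there is no unmatched boundary term), the whole expression would be identically $1$, and the criterion would hold for every split prime --- contradicting Table 1. So the single surviving ratio cannot be produced by cancellation inside that product.

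The paper's proof takes a different route and does not manipulate the product of Theorem \ref{gaussthrm} at all. Under the maximality hypothesis the set $S_1(D)$ consists of all the quadratic residues between $1$ and $D/2$, so the single Jacobi sum $J(\psi^{-1})$ already generates the ideal $N(\bar{\mathcal{P}})\mO_{\Q(\zeta_D)}=\bar{\mathfrak{p}}\mO_{\Q(\zeta_D)}$. Writing $\bar{\mathfrak{p}}^{h_K}=(\alpha)$, one gets $J(\psi^{-1})^{h_K}=\alpha u$ with $u$ a root of unity; since $p\nmid h_K$, Gold's criterion $\alpha^{p-1}\equiv 1 \Mod{\bar{\mathfrak{p}}^2}$ is equivalent to $J(\psi^{-1})^{p-1}\equiv 1\Mod{p^2\Z_p}$ in $K_{\mathfrak{p}}\cong\Q_p$, and the Gross--Koblitz congruence $J(\psi^{-1})\equiv \left(\tfrac{p^{2r}-1}{D/2}\right)_p!\,\big/\,\left(\tfrac{p^{2r}-1}{D}\right)^2_p!\Mod{p^2\Z_p}$ turns this into the stated single Gauss-factorial condition. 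In other words, the lone ratio in Theorem \ref{maximalthrm} is the $i=1$ Jacobi sum itself, not the residue of a combinatorial cancellation; to repair your argument you would need to replace the telescoping step with this ideal-theoretic observation (or show directly that the full product over $i$ coprime to $D$ in $(0,D/2)$ is a $(p-1)$-st power times the $i=1$ term, which is essentially the same fact).
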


\begin{proof}
Here we will view $K \subseteq K_{\frak{p}} \cong \Q_p$.  If $K$ has maximal class number, then $S_1(D)$ accounts for all of the quadratic residues between $1$ and $D/2$, and so $J(\psi^{-1}) \in N(\bar{\mathcal{P}}) =\bar{ \frak{p}}$.  Therefore, if $\bar{\frak{p}}^{h_K} = (\alpha)$ for some $\alpha \in K$, we have $J(\psi^{-1})^{h_K} \equiv \alpha u \Mod{p^2\Z_p}$ where $u \in \mO_K^{\times}$.  Now, since $p \nmid h_K$ we have $J(\psi^{-1})^{h_K(p-1)} \equiv 1 \Mod{p^2\Z_p}$ if and only if  $J(\psi^{-1})^{(p-1)} \equiv 1 \Mod{p^2\Z_p}$.  The result now follows from Gold's criterion and the fact that $u^{p-1} =1$.
\end{proof}

\begin{remark}
When $D = 6$, the combination of Theorems \ref{thrm3} and \ref{thrm4} imply that $\lambda_p(\Q(\sqrt{-3})) > 1$ if and only if $p$ is $1$-exceptional for $m = 3$.  When $D = 4$, we have that $\left(\frac{p^2-1}{2}\right)_p^{p-1}! \equiv 1 \Mod{p^2}$ (a corollary of Wilson's theorem), so $\lambda_p(\Q(i)) > 1$ if and only if $p$ is $1$-exceptional for $m = 4$.
\end{remark}

Theorem \ref{mainthrm} now follows as a special case of Theorem \ref{maximalthrm}.  Computations show that $K = \Q(i)$, $\Q(\sqrt{-2})$, $\Q(\sqrt{-3})$, $\Q(\sqrt{-5})$ and $\Q(\sqrt{-6})$ are the only imaginary quadratic fields $K = \Q(\sqrt{-d})$ with $d < 10,000$ having maximal class number.  In fact,

\begin{theorem}\label{maxclassnumber}
Assuming the generalized Riemann hypothesis (GRH) holds for every non-principle primitive imaginary quadratic character, the only imaginary quadratic fields with maximal class number are  $K = \Q(i)$, $\Q(\sqrt{-2})$, $\Q(\sqrt{-3})$, $\Q(\sqrt{-5})$ and $\Q(\sqrt{-6})$.
\end{theorem}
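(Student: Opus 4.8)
The plan is to convert maximality of the class number into the absence of small inert primes, and then to apply an effective, GRH-conditional bound on the least inert prime of $K$. By the discussion preceding Theorem~\ref{maximalthrm}, for $K\neq\Q(i),\Q(\sqrt{-3})$ one has $(2-\chi_K(2))h_K=\bigl|\sum_{i=1}^{D/2}\chi_K(i)\bigr|$, so $K$ has maximal class number exactly when this sum attains its largest possible value $\phi(D)/2$, i.e.\ when $\chi_K(i)=1$ for every $i\in[1,D/2]$ coprime to $D$. Since $\chi_K$ is completely multiplicative and vanishes precisely on integers not coprime to $D$, this is equivalent to $\chi_K(\ell)=1$ for every prime $\ell\le D/2$ with $\ell\nmid D$: the forward direction is immediate, and conversely $\chi_K(i)$ is the product of the values $\chi_K(\ell)$ over the prime factors $\ell$ of $i$, each of which is $\le D/2$ and coprime to $D$. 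Hence $K$ has maximal class number if and only if $K$ has \emph{no} unramified inert prime that is $\le D/2$ and coprime to $D$. The fields $\Q(i)$ and $\Q(\sqrt{-3})$, with $D=4$ and $D=6$, satisfy this vacuously and lie on the list, so from now on assume $d\notin\{1,3\}$.

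Next I would invoke the least inert prime of $K$. As $\chi_K$ is a nontrivial real character, the smallest $n$ with $\chi_K(n)=-1$ exists and is necessarily prime, and under GRH for $L(s,\chi_K)$ there is an explicit constant $c$ with this prime at most $c(\log D)^2$ (an effective form of Ankeny's theorem, due to Bach and others; note $\log D\asymp\log|\mathrm{disc}\,K|$). Such a prime is automatically coprime to $|\mathrm{disc}\,K|$, hence to $D$, unless it equals $2$ — which forces $d\equiv3\Mod4$ and $\chi_K(2)=-1$; in that one case one uses instead the least \emph{odd} prime with $\chi_K(\cdot)=-1$, still at most $c'(\log D)^2$ under GRH. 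Either way one obtains an unramified inert prime $\ell_0$, coprime to $D$, with $\ell_0\le c'(\log D)^2$.

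Combining the two: if $K$ has maximal class number, then $\ell_0$ is not $\le D/2$, so $D/2<\ell_0\le c'(\log D)^2$, that is $D<2c'(\log D)^2$. This holds for only finitely many $D$, and substituting a published value of $c'$ bounds $D$, and hence $d$, well below $10^4$. Since the computation recorded just before Theorem~\ref{maxclassnumber} verifies that the only $K=\Q(\sqrt{-d})$ with $d<10^4$ and maximal class number are $\Q(i),\Q(\sqrt{-2}),\Q(\sqrt{-3}),\Q(\sqrt{-5}),\Q(\sqrt{-6})$, the theorem follows. As an alternative, one can argue purely analytically: from $h_K=\tfrac{w\sqrt{|\mathrm{disc}\,K|}}{2\pi}L(1,\chi_K)$, the effective GRH bound $L(1,\chi_K)\le(2+o(1))e^{\gamma}\log\log D$, and $\phi(D)\gg D/\log\log D$, one obtains $\sqrt{D}\ll(\log\log D)^2$, which again caps $D$.

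The main obstacle is entirely one of effectivity: one must fix a concrete published GRH-conditional constant for the least inert prime, check that it applies to the Kronecker symbol $\chi_K$ of a conductor that need not be cubefree and that the $\ell=2$ exception is handled correctly, and then confirm that the inequality $D<2c'(\log D)^2$ really forces $D$ into the range already covered by the computation (or, failing that, extend the computation to the modest explicit bound it yields). The structural steps — the reformulation in the first paragraph and the appeal to the least-inert-prime bound — are routine.
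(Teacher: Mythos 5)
Your proposal is correct in outline but takes a genuinely different route from the paper's. The paper never reformulates maximality in terms of inert primes: it simply plays the unconditional Rosser--Schoenfeld lower bound $\phi(D) > D/(e^{\gamma}\log\log D + 3/\log\log D)$ against the GRH-conditional upper bound $h_K \leq \frac{2e^{\gamma}}{\pi}\sqrt{D}\left(\log\log D - \log 2 + \frac12 + \frac{1}{\log\log D}\right)$ of Lamzouri--Li--Soundararajan and Languasco--Trudgian, using the identity $h_K = \phi(D)/2(2-\chi(2))$ for maximal class number, to deduce $\sqrt{D} < 14(\log\log D)^2 + 140$; this fails for all $D > 300$, and the finitely many remaining $D$ are checked directly. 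That is essentially the ``alternative'' you sketch at the end of your second-to-last paragraph. Your primary route --- maximality forces the absence of an inert prime $\ell \leq D/2$ coprime to $D$, while GRH supplies one of size $O((\log D)^2)$ --- is sound: the complete-multiplicativity reduction to primes is right, the $\ell = 2$ wrinkle when $d \equiv 3 \Mod 4$ is correctly flagged, and an explicit constant (available, for instance, in the very Lamzouri--Li--Soundararajan paper already in the bibliography) yields a bound on $D$ comfortably inside the range $d < 10^4$ covered by the stated computation. What the paper's route buys is a much stronger inequality ($\sqrt{D} \ll (\log\log D)^2$ rather than $D \ll (\log D)^2$), hence a tiny final search, and no need to worry about excluding the ramified primes or the prime $2$ from a least-nonresidue bound; what yours buys is a structural explanation of why maximal class number is so restrictive, with no appeal to bounds on $L(1,\chi)$. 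One small inaccuracy: your parenthetical that $\chi_K$ ``vanishes precisely on integers not coprime to $D$'' is false when $d \equiv 3 \Mod 4$, since then $2 \mid D$ but $\chi_K(2) \neq 0$; this does not damage your argument, which only needs that integers coprime to $D$ are coprime to the conductor.
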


\begin{proof}
Let $d > 0$ be a square free integer and let $D$ and $\chi_D = \chi$ be as above.  Denote $K = \Q(\sqrt{-d})$ and $h_K$ to be the class number of $K$, and assume that $h_K = \phi(D)/2(2-\chi(2))$ (i.e. $h_K$ is maximal).  From Theorem 15 in \cite{JBRLS}, we have 
\[
\phi(D) > \frac{D}{e^{\gamma}\log\log(D) + \frac{3}{\log\log(D)}}
\]
where $e =$ exp$(1)$ and $\gamma = 0.577215665...$ is Euler's constant.  On the other hand, under the assumption of the generalized Riemann hypothesis, Littlewood \cite{littlewood} gave the inequality $h_K < ce^{\gamma}\log\log(D) \sqrt{D}$, where $c$ is an absolute constant.  Recently, this bound has been improved (see \cite{LLS} and \cite{LATTS}) to
\[
h_K \leq \frac{2 e^{\gamma}}{\pi}\sqrt{D}\left(\log\log(D) - \log(2) + \frac{1}{2} + \frac{1}{\log\log(D)}\right)
\]
for $D \geq 5$, and assuming GRH holds.  Thus, when $h_K$ is maximal and $D \geq 5$, the two inequalities above imply
\[
\sqrt{D} < \frac{12 e^{\gamma}}{\pi}\left( e^{\gamma}(\log\log(D))^2 + \frac{3}{(\log\log(D))^2} + e^{\gamma} + 3 \right) < 14(\log\log(D))^2 + 140.
\]
This inequality does not hold for long.  Indeed, set $f(x) =  \sqrt{x} - 14(\log\log(x))^2 + 140$ and notice that $f'(x) = \frac{1}{2\sqrt{x}} - \frac{28\log\log(x)}{x\log(x)} > 0$ precisely when $x\log(x) > 56\sqrt{x} \log\log(x)$, which will eventually hold for all $x$ sufficiently large (e.g. for all $x > 300$).  Therefore, we have that $f(x)$ is strictly increasing on $[300, \infty)$.  We also have that $f(300) > 0$, so the inequality $ \sqrt{D} > 14(\log\log(D))^2 + 140$ holds for all $D > 300$.  Therefore, there are no imaginary quadratic fields with $D> 300$ having maximal class number.  It is easy to check that the only imaginary quadratic fields with $D \leq 300$ and maximal class number are the ones listed above.
\end{proof}

%==========================================================================
\section{Proof of Corollaries}

We now turn to the proofs of Corollaries \ref{bernoullicor}, \ref{cor1.2} and \ref{cor1.3} for which we will need some preliminary results.  For $a$ co-prime to $p$ the Fermat quotient is defined as $q_p(a) = (a^{p-1}-1)/p$, which is an integer by Fermat's little Theorem.  The Fermat quotient has logarithmic properties, that is, for $a$ and $b$ co-prime to $p$,
\[
q_p(a) + q_p(b) \equiv q_p(ab) \Mod p \tab \text{and} \tab q_p(a) - q_p(b) \equiv q_p\left( a/b  \right) \Mod p
\]
as well as
\[
q_p(a +p) \equiv q_p(a) - \frac{1}{a} \Mod p.
\]
Denote $H_n = \sum_{a = 1}^n 1/a$ to be the $n$-th harmonic number and $w_p = ((p-1)! + 1)/p$ to be the Wilson quotient (also an integer by Wilson's Theorem).  It is well known that $w_p \equiv \sum_{a = 1}^{p-1}q_p(a) \Mod p$.  
\begin{lemma}\label{lem1.5}
Let $p>2$ be a prime.  For any $b \in (\Z/p^2\Z)^{\times}$ such that $b = b_0 + b_1p$ with $1 \leq b_0 \leq p-1$ and $0 \leq b_1 \leq p-1$, we can write $b \equiv b_0^p \left( 1 +\left( \frac{b_1}{b_0} - q_p(b_0)  \right)p \right) \Mod{p^2}$.
\end{lemma}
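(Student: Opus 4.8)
The plan is to expand the right-hand side directly, using nothing more than the definition of the Fermat quotient. Since $1 \leq b_0 \leq p-1$ we have $\gcd(b_0,p)=1$, so the identity $b_0^{p-1} = 1 + p\,q_p(b_0)$ holds by the very definition of $q_p$, and hence $b_0^p = b_0 + p\,b_0\,q_p(b_0)$; in particular $b_0^p \equiv b_0\bigl(1 + p\,q_p(b_0)\bigr) \Mod{p^2}$. Throughout I regard $b_1/b_0$ and $q_p(b_0)$ as integers taken modulo $p$ — this is harmless because in the statement each of them occurs multiplied by $p$, so only their residues mod $p$ affect anything modulo $p^2$ (this matches the logarithmic-quotient conventions fixed just before the lemma).

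Next I would substitute and multiply out modulo $p^2$:
\[
b_0^p\left(1 + \left(\frac{b_1}{b_0} - q_p(b_0)\right)p\right) \equiv b_0\bigl(1 + p\,q_p(b_0)\bigr)\left(1 + \left(\frac{b_1}{b_0} - q_p(b_0)\right)p\right) \Mod{p^2}.
\]
Expanding the product of the two binomial factors, the term in $p^2$ vanishes modulo $p^2$, and the linear terms combine as $p\,q_p(b_0) + p\bigl(b_1/b_0 - q_p(b_0)\bigr) = p\,b_1/b_0$, so the displayed quantity is congruent to $b_0\bigl(1 + p\,b_1/b_0\bigr) \Mod{p^2}$. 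Finally, $b_0 \cdot (b_1/b_0) \equiv b_1 \Mod{p}$ and this difference is again multiplied by $p$, so $b_0\bigl(1 + p\,b_1/b_0\bigr) \equiv b_0 + p\,b_1 = b \Mod{p^2}$, which is the claim.

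There is no genuine obstacle here; the lemma is just a first-order ``lifting the exponent'' expansion and the proof is two lines of bookkeeping. The only points worth stating explicitly are the reduction convention of the first paragraph — that $b_1/b_0$ and $q_p(b_0)$ may freely be taken modulo $p$ — and the observation that the degree-two terms produced by multiplying out are genuinely divisible by $p^2$, so that nothing is lost when passing to the congruence modulo $p^2$.
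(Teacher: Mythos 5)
Your proof is correct and is essentially the same as the paper's: the paper verifies the identity $1 + px \equiv (pq_p(b_0)+1)(1+px-pq_p(b_0)) \Mod{p^2}$ with $x = b_1/b_0$ and then multiplies through by $b_0$, which is exactly your expansion read in the opposite direction. The bookkeeping about reducing $b_1/b_0$ and $q_p(b_0)$ modulo $p$ is fine and matches the conventions fixed before the lemma.
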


\begin{proof}
Let $b \in (\Z/p^2\Z)^{\times}$ such that $b = b_0 + b_1p$ with $1 \leq b_0 \leq p-1$ and $0 \leq b_1 \leq p-1$.  Then setting $x = b_1/b_0$, we see that $1 + px \equiv (pq_p(b_0)+1)(1+px-pq_p(b_0)) \Mod{p^2}$.  Since $b_0^{p-1} = 1 + pq_p(b_0)$, we obtain the result by multiplying through by $b_0$.
\end{proof}

\begin{proposition}\label{prop1.1}
Suppose $m \in \Z$ with $m \geq 2$ and $p \equiv 1 \Mod m$ is a prime.  Then
\[
\left( \frac{p^2 -1}{m}\right)_p^{p-1}!  \equiv 1 \Mod{p^2} \iff \frac{1}{m}(w_p- H_{\frac{p-1}{m}} ) -\sum_{a = 1}^{\frac{p-1}{m}}q_p(a) \equiv 0 \Mod p.
\]
\end{proposition}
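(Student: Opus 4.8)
The plan is to evaluate the Gauss factorial $\left(\frac{p^2-1}{m}\right)_p!$ modulo $p^2$ in closed form and then raise it to the $(p-1)$-th power. Set $k=\frac{p-1}{m}$, a positive integer with $k\le\frac{p-1}{2}<p$, and observe that
\[
\frac{p^2-1}{m}=\frac{(p-1)(p+1)}{m}=k(p+1)=kp+k.
\]
First I would break the range $1\le i\le kp+k$ into the $k$ complete blocks $\{jp+1,\dots,jp+(p-1)\}$ for $0\le j\le k-1$ together with the partial block $\{kp+1,\dots,kp+k\}$. Every element of every one of these blocks is coprime to $p$ (the partial block because $k<p$), and together they are exactly the integers in $[1,kp+k]$ prime to $p$, so
\[
\left(\tfrac{p^2-1}{m}\right)_p! \;=\; \prod_{j=0}^{k-1}\ \prod_{s=1}^{p-1}(jp+s)\ \cdot\ \prod_{s=1}^{k}(kp+s).
\]

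Next I would evaluate each block modulo $p^2$. Expanding a product of linear factors and discarding everything divisible by $(jp)^2$ gives $\prod_{s=1}^{p-1}(jp+s)\equiv (p-1)!\bigl(1+jp\sum_{s=1}^{p-1}\tfrac1s\bigr)\Mod{p^2}$; since the inverses are a permutation of the nonzero residues, $\sum_{s=1}^{p-1}1/s\equiv\sum_{s=1}^{p-1}s\equiv0\Mod p$, so the correction vanishes mod $p^2$ and each full block is $\equiv(p-1)!\Mod{p^2}$. The same expansion yields $\prod_{s=1}^{k}(kp+s)\equiv k!\,(1+kpH_k)\Mod{p^2}$ with $H_k=H_{(p-1)/m}$. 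Using $(p-1)!=pw_p-1$, hence $((p-1)!)^k\equiv(-1)^k(1-kpw_p)\Mod{p^2}$, I would assemble
\[
\left(\tfrac{p^2-1}{m}\right)_p! \;\equiv\; (-1)^k\,k!\,\bigl(1+kp(H_k-w_p)\bigr)\Mod{p^2}.
\]

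Then I would take the $(p-1)$-th power. One has $((-1)^k)^{p-1}=1$; by the definition of the Fermat quotient $(k!)^{p-1}=\prod_{a=1}^k(1+pq_p(a))\equiv 1+p\sum_{a=1}^k q_p(a)\Mod{p^2}$; and $\bigl(1+kp(H_k-w_p)\bigr)^{p-1}\equiv 1-kp(H_k-w_p)\Mod{p^2}$. Multiplying these,
\[
\left(\tfrac{p^2-1}{m}\right)_p^{p-1}! \;\equiv\; 1+p\Bigl(\sum_{a=1}^k q_p(a)-k(H_k-w_p)\Bigr)\Mod{p^2},
\]
so the left-hand congruence of the Proposition is equivalent to $\sum_{a=1}^{k}q_p(a)+k(w_p-H_k)\equiv0\Mod p$. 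Finally, since $mk=p-1\equiv-1\Mod p$ we have $k\equiv-1/m\Mod p$; substituting turns this into $\frac1m\bigl(w_p-H_{(p-1)/m}\bigr)-\sum_{a=1}^{(p-1)/m}q_p(a)\equiv0\Mod p$, which is the stated criterion. The only delicate point is the bookkeeping modulo $p^2$: one must check that every cross term of the shape $(\text{bounded integer})\cdot p\cdot(\text{something divisible by }p)$ is legitimately discarded, and that the elementary identity $\sum_{s=1}^{p-1}1/s\equiv0\Mod p$ is available (it holds for every odd prime $p$); granting that, the remaining manipulations are routine.
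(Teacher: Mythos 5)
Your argument is correct and is essentially the paper's own proof: the same decomposition of the range $[1,(p^2-1)/m]$ into $(p-1)/m$ full blocks plus one partial block of length $(p-1)/m$, followed by the same elementary mod-$p^2$ bookkeeping (including the identity $\sum_{s=1}^{p-1}1/s\equiv 0\Mod p$ and the substitution $k\equiv -1/m\Mod p$, which is harmless despite the sign flip because the criterion is a vanishing condition). The only cosmetic difference is the order of operations: the paper raises each factor $a+bp$ to the $(p-1)$-th power first via Lemma \ref{lem1.5} and collects exponents of $1+p$, so that $w_p$ enters through the congruence $w_p\equiv\sum_{a=1}^{p-1}q_p(a)\Mod p$, whereas you multiply each block out to $(p-1)!$ and $k!\,(1+kpH_k)$ before exponentiating, so $w_p$ enters directly from its definition $(p-1)!=pw_p-1$.
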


\begin{proof}
Using Lemma \ref{lem1.5}, we have
\begin{align*}
\left( \frac{p^2 -1}{m}\right)_p^{p-1}! & = \prod_{a = 1\atop \gcd(a,p)=1}^{\frac{p^2 - 1}{m}} a^{p-1}  =\left( \prod_{a = 1}^{p-1} \prod_{b = 0}^{\frac{p-1}{m}-1} (a + bp)^{p-1} \right) \left( \prod_{a = 1}^{\frac{p-1}{m}} \left(a + \frac{p-1}{m}p \right)^{p-1}\right)
\\
& \equiv \left( \prod_{a = 1}^{p-1} \prod_{b = 0}^{\frac{p-1}{m}-1} \left(1 + \left(\frac{b}{a} - q_p(a)\right) p \right) \right) \left( \prod_{a = 1}^{\frac{p-1}{m}} \left( 1 + \left( \frac{\frac{p-1}{m}}{a} - q_p(a)\right)p \right)\right) \Mod{p^2}
\\
& \equiv  \left( \prod_{a = 1}^{p-1} \prod_{b = 0}^{\frac{p-1}{m}-1} (1 +p)^{\frac{b}{a} - q_p(a)} \right) \left( \prod_{a = 1}^{\frac{p-1}{m}} (1+p)^{\frac{\frac{p-1}{m}}{a} - q_p(a)} \right) \Mod{p^2}.
\end{align*}
Combining all the factors of $(1+p)$ we get the desired sum in the exponent which is taken modulo $p$ (since $1 + p$ is a $p$-th root of unity modulo $p^2$).  It is known that $H_{p-1} \equiv 0 \Mod p$.  Hence, $\sum_{a =1}^{p-1} \sum_{b = 0}^{\frac{p-1}{m}-1}\frac{b}{a} \equiv 0 \Mod p$.  The result now follows.
\end{proof}

Recall that the Bernoulli numbers $\{B_n\}$ and the Bernoulli polynomials $\{B_n(t)\}$ are defined by
\[
\sum_{n = 0}^{\infty}B_n\frac{x^n}{n!} = \frac{x}{e^x -1} \tab \text{and} \tab \sum_{n = 0}^{\infty}B_n(t)\frac{x^n}{n!} = \frac{xe^{xt}}{e^x -1}.
\]

\begin{lemma}\label{computation}
Let $p$ be a prime such that $p\equiv 1 \Mod {2m}$.  Then
\[
\left(\frac{\left(\frac{p^2-1}{m}\right)_p!}{\left(\frac{p^2-1}{2m}\right)^2_p!}\right)^{p-1} \equiv 1 \Mod{p^2} \iff \frac{B_p(1/m) - 2^{p}B_{p}(1/2m)}{p^2} \equiv 0 \Mod{p}.
\]
\end{lemma}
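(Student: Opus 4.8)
\subsection*{Proof proposal}

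The plan is to run the computation from the proof of Proposition~\ref{prop1.1} for both $m$ and $2m$, and then translate the resulting Fermat--quotient congruence into one about Bernoulli polynomials via Faulhaber's power--sum formula. For $\ell\mid p-1$ set
\[
E_\ell \;=\; \tfrac1\ell\bigl(w_p - H_{(p-1)/\ell}\bigr)\;-\;\sum_{a=1}^{(p-1)/\ell} q_p(a).
\]
Combining the factors of $1+p$ before reading the exponent modulo $p$, the proof of Proposition~\ref{prop1.1} actually gives $\bigl(\tfrac{p^2-1}{\ell}\bigr)_p^{p-1}!\equiv(1+p)^{E_\ell}\Mod{p^2}$ (the precise sign of the exponent is irrelevant, since $1+p$ has order $p$ modulo $p^2$). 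Applying this with $\ell=m$ and with $\ell=2m$ (the latter squared, then dividing), the left--hand side of the lemma is $\equiv(1+p)^{E_m-2E_{2m}}\Mod{p^2}$, and the Wilson--quotient terms cancel, so that
\[
E_m-2E_{2m}\;=\;\tfrac1m\bigl(H_{(p-1)/2m}-H_{(p-1)/m}\bigr)\;-\;\Bigl(\sum_{a=1}^{(p-1)/m} q_p(a)\;-\;2\sum_{a=1}^{(p-1)/2m} q_p(a)\Bigr).
\]
Hence it suffices to prove $p^2\bigl(E_m-2E_{2m}\bigr)\equiv B_p(1/m)-2^pB_p(1/2m)\Mod{p^3}$, since the left side is an integer multiple of $p^2$, forcing the right side to be divisible by $p^2$.

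To reach this I would fix $n=(p-1)/\ell\le p-1$, so every $a\in\{1,\dots,n\}$ is prime to $p$ and $a^{p-1}=1+p\,q_p(a)$. Faulhaber's formula $\sum_{a=1}^{n}a^{p-1}=\bigl(B_p(n+1)-B_p(0)\bigr)/p$ with $B_p(0)=B_p=0$ (as $p$ is odd) gives $\sum_{a=1}^{n}q_p(a)=\bigl(B_p(n+1)-np\bigr)/p^2$, and the $k=p-2$ Faulhaber sum together with $a^{-1}\equiv a^{p-2}\Mod p$ gives $H_n\equiv-\bigl(B_{p-1}(n+1)-B_{p-1}\bigr)\Mod p$. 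Since $(p-1)/\ell+1\equiv1-1/\ell\Mod p$, the reflection formula $B_k(1-x)=(-1)^kB_k(x)$, the functional equation $B_k(x+1)=B_k(x)+kx^{k-1}$, and a Taylor expansion of the polynomial $B_p$ about $1-1/\ell$ (with $B_p'=pB_{p-1}$; the terms of order $\ge2$ in $p/\ell$ have valuation $\ge3$) yield
\[
B_p\!\left(\tfrac{p-1}{\ell}+1\right)\;\equiv\;-B_p(1/\ell)+\tfrac{p^2}{\ell}B_{p-1}(1/\ell)\Mod{p^3},
\]
while similarly $B_{p-1}(n+1)-B_{p-1}\equiv B_{p-1}(1/\ell)-B_{p-1}\Mod p$. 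Substituting these into $p^2(E_m-2E_{2m})$, the $B_{p-1}$--contributions from the two harmonic numbers cancel against those from the two Taylor corrections, and the $np$--terms and the factors $\bigl((p-1)/\ell\bigr)^{p-1}\equiv1+p(1-q_p(\ell))\Mod{p^2}$ cancel between $m$ and $2m$, leaving $p^2(E_m-2E_{2m})\equiv B_p(1/m)-2B_p(1/2m)\Mod{p^3}$. Finally, the same computation shows as a byproduct that $B_p(1/N)\equiv0\Mod{p^2}$ for every $N\mid p-1$ (the point being that von Staudt--Clausen in the form $pB_{p-1}\equiv-1\Mod p$ turns $\tfrac{p^2}{N}B_{p-1}(1/N)$ into $-\tfrac pN+O(p^2)$, cancelling the unwanted $\tfrac pN$); in particular $p^2\mid B_p(1/2m)$, so $2B_p(1/2m)\equiv2^pB_p(1/2m)\Mod{p^3}$ and the $2$ may be replaced by $2^p$.

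The step I expect to be the main obstacle is the bookkeeping modulo $p^3$: because $B_{p-1}$ is not $p$-integral, the whole calculation must be organised so that only the $p$-integral combinations $B_{p-1}(1/\ell)-B_{p-1}$ and $pB_{p-1}$ ever occur, and one must track several small $O(p)$ and $O(p^2)$ corrections (the $q_p(\ell)$ and $np$ pieces, the expansion of $\bigl((p-1)/\ell\bigr)^{p-1}$) precisely enough to see them cancel; it is exactly that cancellation, together with the input $p^2\mid B_p(1/N)$ needed to upgrade $2$ to $2^p$, that forms the technical heart of the argument.
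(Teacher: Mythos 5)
Your proposal is correct and follows essentially the same route as the paper: apply Proposition~\ref{prop1.1} with $m$ and $2m$, cancel the Wilson-quotient terms, and convert the Fermat-quotient sums into values of $B_p$ via the telescoping identity $B_p(x+1)-B_p(x)=px^{p-1}$, with the harmonic-number terms absorbing the $B_{p-1}$ corrections. The only cosmetic difference is how the factor $2^p$ appears — the paper's intermediate congruence carries the $n^{p-1}$ factors that produce it directly, whereas you first obtain $2$ and then upgrade it using $p^2\mid B_p(1/2m)$ — but this is the same computation organised slightly differently.
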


\begin{proof}
For any $n \in \Z^+$ with $p \equiv 1 \Mod{n}$, we use the relation $B_{p}(x+1) - B_{p}(x) = px^{p-1}$ along with the properties of the Fermat quotient to obtain
\[
\sum_{a = 1}^{\frac{p-1}{n}}q_p(a) \equiv \left( \frac{n^{p-1}}{p} \left (\frac{p^2}{n}B_{p-1}- B_{p} (1/n)\right) - \frac{p-1}{n} \right)+\frac{1}{n}q_p(n) - \frac{1}{n}H_{\frac{p-1}{n}}  \Mod{p}.
\]
Then for $p \equiv 1 \Mod {2m}$, a straightforward computation gives
\[
\sum_{a = 1}^{\frac{p-1}{m}}q_p(a) - 2\sum_{a = 1}^{\frac{p-1}{2m}}q_p(a) \equiv -\frac{B_p(1/m) - 2^{p}B_{p}(1/2m)}{p^2}  - \frac{1}{m}H_{\frac{p-1}{m}} + \frac{1}{m}H_{\frac{p-1}{2m}} \Mod p.
\]
From Proposition \ref{prop1.1} we know that $\left(\frac{\left(\frac{p^2-1}{m}\right)_p!}{\left(\frac{p^2-1}{2m}\right)^2_p!}\right)^{p-1} \equiv (1 + p)^{\xi} \Mod{p^2}$, where
\begin{align*}
\xi &= \frac{1}{m}(w_p- H_{\frac{p-1}{m}} ) -\sum_{a = 1}^{\frac{p-1}{m}}q_p(a) - 2\left(\frac{1}{2m}(w_p- H_{\frac{p-1}{2m}} ) -\sum_{a = 1}^{\frac{p-1}{2m}}q_p(a)\right)
\\
&\equiv - \frac{B_p(1/m) - 2^{p}B_{p}(1/2m)}{p^2} \Mod p.
\end{align*}
The result now follows.
\end{proof}

Corollary \ref{bernoullicor}] is an immediate consequence of Lemma \ref{computation}.  We also have,

%========================================================================

\begin{proof}[Proof of Corollary \ref{cor1.2}]
Let $p \equiv 1 \Mod 4$.  We have seen  from Lemma \ref{computation} that $p$ is 1-exceptional for $4$ if and only if $B_p(1/2) - 2^{p}B_{p}(1/4) \equiv 0 \Mod {p^3}$.  But from \cite{lehmer} we know that $B_p(1/2) = 0$ and $B_p(1/4) = -pE_{p-1}/4^p$.  Corollary \ref{cor1.2} now follows from Theorem \ref{mainthrm}.
\end{proof}

\begin{remark}
The proof also shows that $E_{p-1} \equiv 0 \Mod p$ when $p \equiv 1 \Mod 4$, although this was already observed by Zhang in \cite{zhang}.
\end{remark}

%=========================================================================

The proof of Corollary \ref{cor1.3} will be similar to that of Corollary \ref{cor1.2}, but will instead involve the Glaisher numbers $\{G_n\}$.  Since these numbers are less well known we will take a moment to view some of their properties.  In particular, we will see that for odd $n \geq 1$, $B_n(1/3) = -(n+1)G_{n-1}/3^{n-1}$.  Recall the Glaisher numbers $\{G_n\}$ are defined by 
\[
\frac{3/2}{e^x + e^{-x} + 1} = \sum_{n = 0}^{\infty} G_n \frac{x^n}{n!}.
\]
Notice that $2 \sum_{n = 0}^{\infty} G_{2n+1} \frac{x^{2n+1}}{(2n+1)!} =  \sum_{n = 0}^{\infty} G_n \frac{x^n}{n!} -  \sum_{n = 0}^{\infty} G_n \frac{(-x)^n}{n!}
 = 0$ so that $G_n = 0$ whenever $n$ is odd, and $ \sum_{n = 0}^{\infty} G_n \frac{x^n}{n!} =  \sum_{n = 0}^{\infty} G_{2n} \frac{x^{2n}}{(2n)!}$.  We also know from \cite{glaisher2} that $G_n$ can only have powers of 3 in the denominator.  
\begin{example}
In the following table we list all primes $p \equiv 1 \Mod 3$ and $7 \leq p \leq 193$ in the first column, along with the reduced values of $G_{p-1} \Mod p$ in the second column and $G_{p-1} \Mod{p^2}$ in the third column:

\begin{center}
\begin{tabularx}{0.36\textwidth} { 
  | >{\raggedright\arraybackslash}X 
  | >{\centering\arraybackslash}X 
  | >{\centering\arraybackslash}X |}
 \hline
7  & 0  &  42 \\
 \hline
13  &  0 & 0  \\
 \hline
19  & 0  & 342  \\
 \hline
31  &  0 &  434 \\
 \hline
37  & 0  &  1332 \\
 \hline
43  &  0 & 559  \\
 \hline
61  & 0  &  3660 \\
 \hline
67  &  0 & 3685  \\
 \hline
73  & 0  &  803  \\
\hline
79  &  0 & 2844  \\
\hline
\end{tabularx}
\quad
\begin{tabularx}{0.36\textwidth} { 
  | >{\raggedright\arraybackslash}X 
  | >{\centering\arraybackslash}X 
  | >{\centering\arraybackslash}X |}
 \hline
97  &  0 & 1940  \\
 \hline
103  & 0  & 1133  \\
 \hline
109  & 0  &  7521  \\
\hline
127  & 0  &  16002 \\
 \hline
139 &  0 & 5282  \\
 \hline
151  & 0  & 15855  \\
 \hline
157  &  0 & 785  \\
 \hline
163  &  0 &  24939 \\
\hline
181  & 0  & 0  \\
\hline
193 & 0 & 26441\\
\hline
\end{tabularx}
\end{center}
Notice that $13$ and $181$ are the first two $1$-exceptional primes for $m = 3$.  It also appears that $G_{p-1} \equiv 0 \Mod p$ for all $p \equiv 1 \Mod 3$, which we will soon see is true.
\end{example}

We will now show that $B_n(1/3) = -(n+1)G_{n-1}/3^{n-1}$ for odd $n \geq 1$.  It should be noted that this result is already known (see page 352 in \cite{lehmer}), but not commonly stated or proven in the literature.  Observe that
 \begin{align*}
 \frac{-x}{e^{\frac{1}{3}x}+e^{-\frac{1}{3}x}+1} &= -\frac{2}{3}x \frac{3/2}{e^{\frac{1}{3}x}+e^{-\frac{1}{3}x}+1}
 = -\frac{2}{3}x \sum_{n = 0}^{\infty} G_{2n} \frac{\left( \frac{1}{3}x\right)^{2n}}{(2n)!}
  \\
  & = 2\sum_{n = 0}^{\infty} -\frac{(2n+1)G_{2n}}{3^{2n+1}} \frac{x^{2n+1}}{(2n+1)!}
 \end{align*}
 and at the same time
 \begin{align*}
2\sum_{n = 0}^{\infty} B_{2n+1}(1/3) \frac{x^{2n+1}}{(2n+1)!}  =  \frac{x(e^{\frac{1}{3}x}-e^{\frac{2}{3}x})}{e^x -1} = \frac{xe^{\frac{1}{3}x}(1-e^{\frac{1}{3}x})}{(e^{\frac{1}{3}x} -1)(e^{\frac{2}{3}x}+e^{\frac{1}{3}x}+1)} = \frac{-x}{e^{\frac{1}{3}x}+e^{-\frac{1}{3}x}+1} 
\end{align*}
 Therefore,
 \[
 \sum_{n = 0}^{\infty} -\frac{(2n+1)G_{2n}}{3^{2n+1}} \frac{x^{2n+1}}{(2n+1)!} = \sum_{n = 0}^{\infty} B_{2n+1}(1/3) \frac{x^{2n+1}}{(2n+1)!}
 \]
 which implies, 
 \[
 B_{2n+1}(1/3) =  -\frac{(2n+1)G_{2n}}{3^{2n+1}}.
 \]
 For $k, n \in \Z^+$, we also have Raabe's multiplication formula $B_n(kx) = k^{n-1}\sum_{j=0}^{n-1}B_n(x + j/k)$.  So, with $x = 1/6$ and $k = 2$ we have
 \[
 B_{2n+1}(1/6) = \frac{2^{2n}+1}{2^{2n}}B_{2n+1}(1/3)
 \]
 
 \begin{proof}[Proof of Corollary \ref{cor1.3}]
Let $p \equiv 1 \Mod 3$.  Then from Lemma \ref{computation} $p$ is 1-exceptional for $m = 3$ if and only if 
\[
\frac{B_p(1/3) - 2^{p}B_{p}(1/6)}{p^2} = -\frac{(1 + 2^p)B_{p}(1/3)}{p^2} =   \left(\frac{1 + 2^p}{3^p}\right)\frac{G_{p-1}}{p} \equiv 0 \Mod p.
\]
 The result now follows from Theorem \ref{mainthrm}.
 \end{proof}
 
 \begin{remark}
 From the proof of Corollary \ref{cor1.3} we also have that $G_{p-1}\equiv 0 \Mod p$ for all primes $p \equiv 1 \Mod 3$.
 \end{remark}
 
 \section{Some further questions}

Dummit, Ford, Kisilevsky and Sands conjecture in \cite{dfks} that given a fixed imaginary quadratic field $K$, there are infinitely many primes such that $\lambda_p(K) > 1$.  We can now restate this conjecture in the case of $K = \Q(i)$ and $K = \Q(\sqrt{-3})$ in a way that may be of interest to those who study Euler and Glaisher numbers, as well as Gauss factorials:
\begin{conjecture}
There are infinitely many primes $p \equiv 1 \Mod 3$ such that $G_{p-1} \equiv 0 \Mod{p^2}$.  Equivalently, there are infinitely many primes $p \equiv 1 \Mod 3$ such that $p$ is 1-exceptional for $m = 3$.
\end{conjecture}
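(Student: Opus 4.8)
A plausible line of attack begins by reducing the conjecture to a prime-value problem for a binary recurrence. By Corollary \ref{cor1.3} the statement is equivalent to the assertion that there are infinitely many non-trivial primes for $K = \Q(\sqrt{-3})$ (a special case of the Dummit--Ford--Kisilevsky--Sands conjecture), and by Corollary \ref{corollary1} together with Theorem \ref{thrm5} it would suffice to exhibit infinitely many Cosgrave--Dilcher primes. Writing $\gamma = 2 + \sqrt{3}$ and $L_q = \gamma^q + \gamma^{-q}$, the $L_q$ are integers satisfying the Lucas-type recurrence $L_{q+1} = 4L_q - L_{q-1}$ with $L_0 = 2$ and $L_1 = 4$; one checks from the recurrence that $L_q \equiv 0 \Mod{4}$ exactly when $q$ is odd, in which case $p = L_q/4$ is a Cosgrave--Dilcher prime precisely when it is prime --- and this forces $q$ itself to be prime, by Lemma 7 in \cite{cd1}. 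Thus the task reduces to showing that the sequence $(L_q/4)$, with $q$ ranging over odd primes, takes prime values infinitely often.

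This reduced statement is a Lucas-sequence primality question of exactly the same character as the infinitude of the Fibonacci primes, the Lucas primes, or the Mersenne primes, and I expect this to be the essential obstacle: no unconditional method is presently known that produces infinitely many primes in any such exponentially growing sequence. The usual heuristic --- that $L_q/4$ behaves like a ``random'' integer of its size, subject only to the obvious congruence constraints (for instance $2 \nmid L_q/4$ and $3 \nmid L_q/4$) --- predicts roughly $c \log\log X$ Cosgrave--Dilcher primes $p \le X$ for an explicit positive constant $c$, entirely consistent with the conjecture but yielding no rigorous lower bound. Even the much weaker assertion that $L_q/4$ has a bounded number of prime factors for infinitely many $q$ seems to lie beyond current sieve technology; and in any case Theorem \ref{thrm5} requires $p$ itself to be prime, so an almost-prime result would not feed back into the conjecture without a structural description of the sporadic $1$-exceptional primes (such as $p = 76543$) that do not arise from the Pell equation, and no such description is available.

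A second, a priori more flexible, route is to attack the congruence directly. Since $G_{p-1} \equiv 0 \Mod{p}$ for every prime $p \equiv 1 \Mod{3}$ (as observed after the proof of Corollary \ref{cor1.3}), the quantity $g(p) := (G_{p-1}/p) \bmod p \in \Z/p\Z$ is well defined, and the conjecture asserts that $g(p) = 0$ infinitely often. Using the identity $B_{2n+1}(1/3) = -(2n+1)G_{2n}/3^{2n+1}$ and the Fermat-quotient computations underlying Lemma \ref{computation}, one can write $g(p)$ as an explicit congruence expression and try to detect its vanishing by a character-sum or circle-method argument. The difficulty is that $g(p) = 0$ is a Wieferich-type condition --- a ``second-order'' vanishing, directly analogous to $p$ being a Wieferich prime --- and such supercongruences currently admit no analytic handle: $g(p)$ appears to equidistribute in $\Z/p\Z$, so that the expected number of qualifying $p \le X$ is $\sim c' \log\log X \to \infty$, which again supports but does not prove the conjecture. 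Even strong inputs such as the $abc$ conjecture do not obviously help, as they are suited to controlling the \emph{non-}vanishing of such quantities rather than to forcing vanishing. For these reasons the conjecture appears genuinely open, with the Lucas-prime reduction above being, in my view, the most concrete available point of entry.
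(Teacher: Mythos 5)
The statement you were asked about is a \emph{conjecture}: the paper offers no proof of it, and indeed presents it as a restatement (via Corollaries \ref{cor1.3} and \ref{mainthrm}) of the Dummit--Ford--Kisilevsky--Sands conjecture for $K = \Q(\sqrt{-3})$. You correctly do not claim to prove it, so there is no proof to compare yours against; what you have written is an accurate assessment of why it is open. Your reduction chain is sound and matches the paper's own framing: Corollary \ref{cor1.3} makes the congruence $G_{p-1} \equiv 0 \Mod{p^2}$ equivalent to $\lambda_p(\Q(\sqrt{-3})) > 1$, Corollary \ref{corollary1} and Theorem \ref{thrm5} show that infinitely many Cosgrave--Dilcher primes would suffice, and Remark \ref{rempell} already records both the analogy with Fibonacci primes and the fact that the converse fails (the sporadic $1$-exceptional prime $p = 76543$ is not of Pell type), so the Lucas-sequence route gives only a sufficient condition, as you note. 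Your arithmetic on the sequence $L_q = \gamma^q + \gamma^{-q}$ checks out ($L_{q+1} = 4L_q - L_{q-1}$, $L_0 = 2$, $L_1 = 4$, with $4 \mid L_q$ exactly for odd $q$, and $q$ necessarily prime by Lemma 7 of \cite{cd1}), and your characterization of the congruence $G_{p-1} \equiv 0 \Mod{p^2}$ as a Wieferich-type condition with expected count $\sim c'\log\log X$ is the standard heuristic consistent with the data in \cite{dfks}. The only caution I would add is that your heuristic count for Cosgrave--Dilcher primes $p \le X$ should be phrased in terms of the exponent $q \ll \log X$, but this does not affect the conclusion that the problem is genuinely open and that no current method applies.
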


\begin{conjecture}
There are infinitely many primes $p \equiv 1 \Mod 4$ such that $E_{p-1} \equiv 0 \Mod{p^2}$.  Equivalently, there are infinitely many primes $p \equiv 1 \Mod 4$ such that $p$ is 1-exceptional for $m = 4$.
\end{conjecture}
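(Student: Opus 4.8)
The plan is necessarily a modest one: I would not expect to settle this conjecture outright. In all three of its equivalent forms it is a \emph{Wieferich-type} assertion — that a fixed mod $p^{2}$ congruence should hold for infinitely many primes in a prescribed residue class — and no statement of that shape is presently known unconditionally (compare the conjectural infinitude of Wieferich primes). The first thing to record is that, by Corollary \ref{cor1.2} together with the $m=4$ case of Theorem \ref{mainthrm}, the three formulations are already provably equivalent, so one may attack whichever is most convenient. The Euler-number form is probably cleanest: by the remark following Corollary \ref{cor1.2}, $E_{p-1}\equiv 0 \Mod p$ automatically for $p\equiv 1 \Mod 4$, so $E_{p-1}/p$ is an integer and the condition is simply that $p \mid E_{p-1}/p$.

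Next I would give the heuristic that makes the conjecture believable, since that is essentially all one can honestly offer. Modelling $E_{p-1}/p \bmod p$ as equidistributed over $\Z/p\Z$ as $p$ runs through primes $\equiv 1 \Mod 4$, the ``chance'' that such a $p$ is $1$-exceptional for $m=4$ is about $1/p$; since $\sum_{p\equiv 1\,(4)}1/p$ diverges, one expects infinitely many, and in fact the count below $x$ should grow like $\tfrac12\log\log x$. This is exactly the Wieferich heuristic, and it sits comfortably with $p=29789$ being the only known example below $10^{11}$. The same reasoning, run with the Glaisher numbers, underlies the companion conjecture for $m=3$.

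The one concrete line of attack I would actually pursue is to hunt for an analogue, for $K=\Q(i)$, of the Cosgrave--Dilcher family that handles $\Q(\sqrt{-3})$. There, Theorem \ref{thrm5} and Corollary \ref{corollary1} manufacture non-trivial primes from the Pell family $p=(\gamma^{q}+\gamma^{-q})/4$ with $\gamma=2+\sqrt{3}$, so the infinitude of non-trivial primes for $\Q(\sqrt{-3})$ reduces to a Fibonacci-primes-type question. Unwinding Gold's criterion (Theorem \ref{gold}) and Proposition \ref{propfrakp}, the mechanism is that $\overline{\mathfrak{p}}^{\,h_K}$ has a generator which — modulo $\overline{\mathfrak{p}}^{\,2}$, and modulo units and $p$-th powers — is trivial, and that triviality is forced when the generator carries special multiplicative structure (for instance when it arises from a unit in a real subfield, such as the power of $2+\sqrt{3}$ appearing above). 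Since $h_{\Q(i)}=1$, the goal would be to parametrise — by the solutions of a Pell-type or norm equation (by analogy with $\Q(\sqrt{3})$, a natural candidate is $\Q(\sqrt{2})$ with fundamental unit $1+\sqrt{2}$), or by a linear recurrence — an infinite family of primes $p=a^{2}+b^{2}$ for which the congruence $(a+bi)^{p-1}\equiv 1 \Mod{(a-bi)^{2}}$ is automatic; the conjecture would then follow conditionally on the infinitude of that family, itself a Fibonacci-primes-type problem.

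I expect the main obstacle to be exactly the construction in the previous paragraph. The numerical record — a single known example — strongly suggests that the $1$-exceptional primes for $m=4$ form a thin, sporadic set rather than one cut out by a clean algebraic family, so the right substitute for the Pell equation $X^{2}-3Y^{2}=1$ may simply fail to exist. Absent it, one is left with the unconditional problem, which is as far out of reach as the infinitude of Fibonacci primes; and while a conditional result under ABC (in the spirit of Silverman's work on non-Wieferich primes) is worth exploring, the direction needed here — forcing a congruence to \emph{hold} rather than to fail — is the unfavourable one, and ABC does not obviously deliver it.
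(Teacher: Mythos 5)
This statement is a conjecture, and the paper offers no proof of it: its only mathematical content is the equivalence of the two stated forms with each other and with the assertion that $\lambda_p(\Q(i))>1$ for infinitely many $p$, which the paper obtains from Theorem \ref{mainthrm} and Corollary \ref{cor1.2} and identifies as the Dummit--Ford--Kisilevsky--Sands conjecture specialised to $K=\Q(i)$. You correctly recognise this status, and the equivalence you invoke is exactly the one the paper establishes, so on the only provable part of the statement you and the paper coincide. What you add goes beyond the paper but is sound and properly hedged: the Wieferich-type heuristic (with the expected count $\tfrac12\log\log x$, consistent with $p=29789$ being the sole example below $10^{11}$); the correct unwinding of Gold's criterion (Theorem \ref{gold}) for $h_{\Q(i)}=1$ into the congruence $(a+bi)^{p-1}\equiv 1 \Mod{(a-bi)^2}$ for $p=a^2+b^2$; and the proposal to seek an analogue of the Cosgrave--Dilcher Pell family of Theorem \ref{thrm5} and Corollary \ref{corollary1}, together with an honest assessment of why such a family may not exist and why ABC-style arguments point the wrong way. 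There is no gap to report, since there is nothing here that can currently be proved; the one thing to keep in view is that the word ``equivalently'' in the conjecture is carried entirely by Corollary \ref{cor1.2}, so any future attack on either form automatically resolves the other.
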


%==========================================================================
\section{Acknowledgments}

The author wishes to thank Nancy Childress, John Cosgrave and Karl Dilcher without whom this work would not be possible.  He also wishes to thank Neil Sloane for OEIS:A239902 \cite{sloane} which is where the author found \cite{cd1} and \cite{cd}.  The author is also grateful to the referee for helpful suggestions, in particular, comments which led to the proof of Theorem \ref{maxclassnumber}.

%==========================================================================

%+++++++++++++++++++++++++++++comment++++++++++++++++++++++++++++++++++++++


\begin{thebibliography}{9}

\bibitem{bew} 
Berndt, B.C., Evans, R.J., and Williams, K.S., Gauss and Jacobi Sums.  Wiley, New York, 1998.


\bibitem{cd1}
Cosgrave, J.B., and Dilcher, K.,  The multiplicative order of certain Gauss factorials. International Journal of Number Theory Vol. 07, No. 01, pp. 145-171 (2011)

\bibitem{cd}
Cosgrave, J.B., and Dilcher, K., The multiplicative order of certain Gauss factorials, II., Funct. Approx. Comment. Math. 54 (1) 73 - 93, March 2016. 

\bibitem{dfks}
Dummit, D., Ford, D., Kisilevsky, H., and Sands, J., Computation of Iwasawa lambda invariants for imaginary quadratic fields. J. Number Theory, 37 (1991), 100-121.

\bibitem{ejv}
Ellenberg, J., Jain, S., and Venkatesh, A., Modelling $\lambda$-invariants by $p$-adic random matrices. September 2011 Communications on Pure and Applied Math- ematics 64(9):1243 - 1262

\bibitem{gold}
Gold, R., The nontriviality of certain $\Z_l$-extensions, J. Number Theory 6 (1974), 369-373.

\bibitem{glaisher1} Glaisher, J.W.L., On a Congruence Theorem relating to an Extensive Class of Coefficients, Proceedings of the London Mathematical Society, Volume s1-31, Issue 1, April 1899, 193–215.

\bibitem{glaisher2} Glaisher, J.W.L., On a Set of Coefficients analogous to the Eulerian Numbers, Proc. London Math. Soc., 31 (1899), 216-235.

\bibitem{horie} Horie, K., A note on basic Iwasawa $\lambda$-invariants of imaginary quadratic fields, Inventiones mathematicae, 1987, Volume 88, Number 1, Page 31

\bibitem{ito} Ito, A., On certain infinite families of imaginary quadratic fields whose Iwasawa $\lambda$-invariant is equal to 1, Acta Arith. 168 (2015), 301–339.

\bibitem{LLS}
Lamzouri, Y., Li, X., Soundararajan, K., Conditional bounds for the least quadratic nonresidue and related problems, Math. Comp. 84 (2015), 2391–2412. Corrigendum ibid., Math. Comp. 86 (2017), 2551–2554

\bibitem{LATTS}
Languasco, A.,  Trudgian, T. S., Uniform effective estimates for $|L(1,\chi)|$, Journal of Number Theory, Volume 236, 2022, Pages 245-260, ISSN 0022-314X, https://doi.org/10.1016/j.jnt.2021.07.019.

\bibitem{lehmer} Lehmer, E., On Congruences Involving Bernoulli Numbers and the Quotients of Fermat and Wilson. Annals of Mathematics, Second Series, Vol. 39, No. 2 (Apr., 1938), pp. 350-360

\bibitem{littlewood}  Littlewood, J. E., On the class number of the corpus $P(\sqrt{-k})$, in: Collected Papers of J. E. Littlewood, Vol. II, Oxford Univ. Press, 1982, 920–934

\bibitem{JBRLS}
Rosser, J., Schoenfeld, L., Approximate formulas for some functions of prime numbers, Illinois Journal of Mathematics, Illinois J. Math. 6(1), 64-94, (March 1962)

\bibitem{sands1} Sands, J., On small Iwasawa invariants and imaginary quadratic fields, Proceedings of the American Mathematical Society, Volume 112, Number 3, July 1991.

\bibitem{sands} Sands, J., On the non-triviality of the basic Iwasawa $\lambda$-invariant for an infinitude of imaginary quadratic fields. Acta Arithmetica 65.3 (1993): 243-248.

\bibitem{sloane} Sloan, N. J. A., Sequence A239902 in The On-Line Encyclopedia of Integer Sequences (2014), published electronically at https://oeis.org

\bibitem{zhang}
Zhang, W., Some identities involving the Euler and the central factorial numbers, Fibonacci Quart. 36 (2) (1998)
154–157.

\end{thebibliography}
\end{document}